 \theoremstyle{plain}
\newtheorem{theorem}{Theorem}[section]
\newtheorem{lemma}[theorem]{Lemma}
\newtheorem{definition}[theorem]{Definition}
\newtheorem{example}[theorem]{Example}
\newtheorem{condition}{Condition}
\newtheorem{remark}[theorem]{Remark}
\newcommand{\ep}{{\mathbb E}}
\newcommand{\pr}{{\mathbb P}}
\newcommand{\re}{{\mathbb R}}
\newcommand{\Z}{{\mathbb Z}}
\newcommand{\vc}[1]{{\mathbf{#1}}}
\newcommand{\bin}{{\rm Bin}}
\newcommand{\var}{{\rm Var}}
\newcommand{\C}[1]{{\bf ULC}(#1)}
\newcommand{\Ent}[2]{{\rm Ent}_{#1}(#2)}
\newcommand{\Gam}[1]{\Gamma_{#1}}
\newcommand{\tends}{\rightarrow \infty}
\newcommand{\EE}{{\mathcal{E}}_P}
\newcommand{\bi}{\begin{itemize}}
\newcommand{\ei}{\end{itemize}}
\begin{document}

\title{Entropy and thinning of discrete random variables}
\date{\today}
\author{Oliver Johnson\thanks{School of Mathematics, Univeristy of Bristol, University Walk, Bristol, BS8 1TW, UK. Email: {\tt maotj@bristol.ac.uk}}}
\maketitle

\begin{abstract} \noindent
We describe five types of results concerning information and concentration of discrete random variables, and relationships
between them, motivated by their counterparts in the continuous case. The results we consider are
information theoretic approaches to Poisson approximation, the maximum entropy property of the Poisson distribution, 
discrete concentration (Poincar\'{e} and logarithmic Sobolev) inequalities, monotonicity of entropy and concavity of entropy
in the Shepp--Olkin regime.
\end{abstract}

\section{Results in continuous case} \label{sec:conts}
This paper  gives a personal review of a number of  results concerning the entropy and concentration properties of discrete random variables. 
For simplicity, we  only consider independent  random variables  (though it is an extremely interesting open problem to extend many of the results
to the dependent case).
These results are generally motivated by their
counterparts in the continuous case, which we will briefly review, using notation which holds only for Section \ref{sec:conts}.

For simplicity we restrict our attention in this section to random variables taking values in $\re$.
 For any probability density $p$, write
 $\lambda_p = \int_{-\infty}^{\infty}x p(x) dx$ for its mean and $\var_p = \int_{-\infty}^{\infty} (x - \lambda_p)^2 p(x)
dx$ for its variance.  We write $h(p)$ for the differential entropy of  $p$, and interchangeably write
$h(X)$ for $X \sim p$. Similarly we write $D(p \| q)$ or $D(X \| Y)$ for relative entropy.
We write $\phi_{\mu,\sigma^2}(x)$ for
the density of Gaussian $Z_{\mu,\sigma^2} \sim N(\mu, \sigma^2)$.

Given a function $f$, we wish to measure its concentration properties with respect to probability 
density $p$; we write $\lambda_{p}(f) = \int_{-\infty}^{\infty}f(x) p(x) dx$ for the expectation of 
$f$ with respect to $p$, write $\var_p(f) = \int_{-\infty}^{\infty}p(x) (f(x) - \lambda_{p}(f))^2 dx$ for the variance
and define 
\begin{equation} \label{eq:defentcont}
\Ent{p}{f} = \int_{-\infty}^{\infty} p(x) f(x) \log f(x) dx - \lambda_{p}(f) \log \lambda_{p}(f).\end{equation}
We briefly summarize the five types of results we  study in this paper, the discrete analogues of which are
described in more detail in the five sections from Sections \ref{sec:score} to \ref{sec:so} respectively.
\begin{enumerate}
\item{{\bf [Normal approximation]}}
A paper by Linnik \cite{linnik} was the first to attempt to use ideas from information theory to prove the Central Limit Theorem, with
later contributions by Derriennic \cite{derriennic} and Shimizu \cite{shimizu}.
This idea was developed considerably by Barron \cite{barron}, building on results of Brown \cite{brown}. Barron  considered 
 independent and identically distributed $X_i$ with mean $0$ and variance $\sigma^2$. He proved \cite{barron}
that relative entropy $D( (X_1 + \ldots + X_n)/\sqrt{n} \| Z_{0,\sigma^2})$ converges to 
zero if and only if it ever becomes finite.
Carlen and Soffer \cite{carlen}  proved similar results for non-identical variables under a Lindeberg-type condition.

While none of these papers gave an explicit rate of convergence, later work \cite{artstein2,ball,johnson5} proved
that relative entropy converges at an essentially optimal $O(1/n)$ rate, under the (too strong)
assumption of finite Poincar\'{e} constant (see
Definition \ref{def:poincont} below). Typically (see  \cite{johnson14} for a review), these papers did not manipulate entropy directly, but rather  used  properties of projection operators to control
the behaviour of Fisher information on convolution. The use of Fisher information is based on the de Bruijn identity (see \cite{barron,blachman,stam}),
which relates entropy to Fisher information, using the fact that (see for example \cite[Equation (5.1)]{stam}
 for fixed random variable $X$, the density $p_t$ of $X + Z_{0,t}$ satisfies a heat equation of the form
\begin{equation} \label{eq:heateqn}
\frac{\partial p_t}{\partial t}(z) = \frac{1}{2} \frac{\partial^2 p_t}{\partial z^2}(z) = \frac{1}{2} \frac{\partial}{\partial z} \left( p_t(z)  \rho_t(z) \right),\end{equation}
where $\rho_t(z) := \frac{\partial p_t}{\partial z}(z)/p_t(z)$ is the score function with respect to location parameter, in the Fisher sense.

More recent work of Bobkov, Chistyakov and 
G\"{o}tze \cite{bobkov6,bobkov8} used properties of characteristic functions to remove the assumption of finite
Poincar\'{e} constant, and even extended this theory to include convergence to other stable laws \cite{bobkov7,bobkov10}.
 
\item{{ \bf [Maximum entropy]}} In \cite[Section 20]{shannon2}, Shannon described maximum entropy results for 
continuous random variables in exponential family form.  That is, given a test function $\psi$, positivity of the relative
entropy shows that the entropy $h(p)$ is maximised for fixed values of $\int_{-\infty}^{\infty}p(x) \psi(x) dx$ by densities proportional to
$\exp(- c \psi(x))$ for some value of $c$.

This is a very useful result in many cases, and gives us well-known facts such as that entropy is maximised under a variance
constraint by the Gaussian density. This motivates the  information theoretic approaches to normal approximation
described above, as well as the monotonicity of entropy result of \cite{artstein} described later.

However, many interesting densities cannot be expressed in exponential family form for a natural choice of $\psi$, and we  
would like to  extend maximum entropy theory to cover these.
As remarked in \cite[p.215]{gnedenko2}, ``immediate candidates to be subjected to such analysis are, of course, stable laws''.
For example, consider the Cauchy density, for which it is not at all natural to
consider the expectation of $\psi(x) = \log(1 + x^2)$ in order to define a maximum entropy class. In the discrete context
of Section \ref{sec:maxent} we discuss the Poisson mass function, again believing that 
$\ep \log X!$ is not a natural object of study.

\item{{\bf [Concentration inequalities]}}
Next we consider concentration inequalities; that is, relationships between senses in which a function is approximately constant.
We focus on Poincar\'{e} and logarithmic Sobolev inequalities.
\begin{definition} \label{def:poincont}
We say that a probability density $p$ has Poincar\'{e} constant $R_p$, if for all sufficiently well-behaved functions $g$,
\begin{equation} \label{eq:poincont} \var_p(g) \leq R_p \int_{-\infty}^{\infty}p(x) g'(x)^2 dx. \end{equation}
\end{definition}
Papers vary in the exact sense of `well-behaved' in this definition, and for the sake of brevity we will not focus on this issue;
we will simply suppose that both sides of \eqref{eq:poincont} are well-defined.
\begin{example} \label{ex:chernoff}
Chernoff \cite{chernoff} (see also \cite{borovkov2}) 
used an expansion in Hermite polynomials (see \cite{szego}) to show that the Gaussian densities
$\phi_{\mu,\sigma^2}$ have Poincar\'{e} constant equal to $\sigma^2$. By considering $g(t) = t$, it is clear that for
any random variable $X$ with finite variance, $R_p \geq \var_p$. Chernoff's result 
\cite{chernoff} played a central role in the information theoretic proofs of the Central
Limit Theorem of  Brown \cite{brown} and Barron \cite{barron}. 
\end{example}
In a similar way, we define the log-Sobolev constant, in the form discussed in \cite{bakry2}:
\begin{definition} \label{def:lsicont}
We say that a probability density $p$ satisfies the 
 logarithmic Sobolev inequality with constant
$C$ if for any function $f$ with positive values:
\begin{equation} \label{eq:lsicont} \Ent{p}{f} \leq \frac{C}{2} \int_{-\infty}^{\infty} \frac{\Gam{1}(f,f)(x)}{f(x)} p (x) dx, \end{equation}
where we write $\Gam{1}(f,g) = f' g'$, and view the RHS as a Fisher-type expression.
\end{definition}
This is an equivalent formulation  of the log-Sobolev inequality first stated by Gross \cite{gross}, who proved that the
Gaussian density $\phi_{\mu,\sigma^2}$ has log-Sobolev constant $\sigma^2$. In fact, Gross's 
Gaussian log-Sobolev inequality \cite{gross} follows from
Shannon's Entropy Power Inequality \cite[Theorem 15]{shannon2}, as proved by the Stam--Blachman approach \cite{blachman,stam}.

However, we would like to find results that strengthen Chernoff and Gross's results, to provide inequalities of 
similar functional form, under assumptions that $p$ belongs to particular classes of densities.
Gross's Gaussian log-Sobolev inequality can be considerably generalized by the 
Bakry--\'{E}mery calculus \cite{bakry} (see \cite{ane,bakry2,guionnet} for reviews of this theory).
Assuming that the so-called Bakry-\'{E}mery condition is satisfied, 
 taking $U(t) = t^2$ in \cite[Proposition 5]{bakry} we deduce (see also 
\cite[Proposition 4.8.1]{bakry2}):
\begin{theorem} \label{thm:contpoin} If 
the Bakry-\'{E}mery condition holds with constant $c$ then the Poincar\'{e} inequality holds with constant
$1/c$.
\end{theorem}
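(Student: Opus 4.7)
The plan is to prove Poincaré from Bakry--\'{E}mery via exponential decay of the Dirichlet form along the associated semigroup, which is the classical argument underlying \cite[Proposition 5]{bakry}. Let $(P_t)_{t \geq 0}$ be the Markov semigroup with generator $L$ and invariant measure $p$ naturally associated with the setup, so that the carré du champ $\Gam{1}(f,g) = \tfrac{1}{2}\bigl(L(fg) - fLg - gLf\bigr)$ coincides with the $f'g'$ of Definition \ref{def:lsicont}, and $\Gam{2}$ is its iterated analogue. The Bakry--\'{E}mery hypothesis with constant $c$ asserts that $\Gam{2}(f,f) \geq c\,\Gam{1}(f,f)$ pointwise.

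First I would differentiate twice along the semigroup. Using $\partial_t P_t f = L P_t f$ together with the self-adjointness of $L$ in $L^2(p)$ and integration by parts, one obtains the standard identities
\begin{equation*}
\frac{d}{dt} \var_p(P_t f) = -2 \int \Gam{1}(P_t f, P_t f)\, p(x)\, dx,
\end{equation*}
and
\begin{equation*}
\frac{d}{dt} \int \Gam{1}(P_t f, P_t f)\, p(x)\, dx = -2 \int \Gam{2}(P_t f, P_t f)\, p(x)\, dx.
\end{equation*}
Inserting the Bakry--\'{E}mery bound into the second identity turns it into a differential inequality, and Gr\"{o}nwall's lemma yields the exponential decay
\begin{equation*}
\int \Gam{1}(P_t f, P_t f)\, p(x)\, dx \leq e^{-2ct} \int \Gam{1}(f,f)\, p(x)\, dx.
\end{equation*}

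Finally, I would integrate the first identity over $t \in [0,\infty)$. Ergodicity gives $P_t f \to \lambda_p(f)$ and hence $\var_p(P_t f) \to 0$, so combining with the decay estimate,
\begin{equation*}
\var_p(f) = 2 \int_0^\infty \int \Gam{1}(P_t f, P_t f)\, p(x)\, dx\, dt \leq \left( \int_0^\infty 2 e^{-2ct} \,dt \right) \int \Gam{1}(f,f)\, p(x)\, dx = \frac{1}{c} \int \Gam{1}(f,f)\, p(x)\, dx,
\end{equation*}
which is \eqref{eq:poincont} with $R_p = 1/c$, since $\Gam{1}(f,f) = (f')^2$. The main obstacle is analytic rather than conceptual: one needs the semigroup to exist, be ergodic with invariant measure $p$, and $f$ to lie in an algebra of test functions for which the derivative identities, integrations by parts, and interchange of time- and space-integrals above are all legitimate. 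In the Bakry--\'{E}mery framework this is handled by working on a suitable core and extending by density to the ``sufficiently well-behaved'' class allowed in Definition \ref{def:poincont}.
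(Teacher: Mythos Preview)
Your argument is correct and is precisely the classical semigroup proof: differentiate the variance and the Dirichlet form along $(P_t)$, use $\Gam{2} \geq c\,\Gam{1}$ with Gr\"{o}nwall to get exponential decay of the energy, and integrate in time. Note, however, that the paper does not actually give its own proof of this statement; it simply quotes the result from \cite[Proposition 5]{bakry} (taking $U(t)=t^2$) and \cite[Proposition 4.8.1]{bakry2}, and your derivation is exactly the argument underlying those references, so there is nothing further to compare.
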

Similarly \cite[Theorem 1]{bakry} (see also \cite[Proposition 5.7.1]{bakry2}) gives that:
\begin{theorem} \label{thm:contlsi}
If 
the Bakry-\'{E}mery condition 
 holds with constant $c$ then the logarithmic Sobolev inequality holds with constant
$1/c$.
\end{theorem}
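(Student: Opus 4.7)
The plan is to use the semigroup method of Bakry and \'{E}mery. I would introduce the Markov diffusion semigroup $P_t = e^{tL}$ associated with a generator $L$ whose invariant measure is $p$ and whose carr\'{e} du champ is $\Gam{1}$; the Bakry-\'{E}mery condition with constant $c$ is the pointwise curvature bound $\Gam{2}(f,f) \geq c\,\Gam{1}(f,f)$, where $\Gam{2}(f,f) = \tfrac{1}{2}\bigl( L\,\Gam{1}(f,f) - 2\,\Gam{1}(f,Lf) \bigr)$ is the iterated carr\'{e} du champ. The strategy is to convert this infinitesimal inequality into exponential gradient decay along the heat flow and then into the integrated functional inequality \eqref{eq:lsicont}.

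First I would express $\Ent{p}{f}$ as a time integral of a Fisher-type quantity along the flow. Take $f>0$ smooth and bounded away from $0$ and $\infty$, set $\phi(x) = x\log x$, and consider $F(s) = P_s(\phi(P_{t-s}f))$. The diffusion chain rule $L\phi(g) - \phi'(g)Lg = \phi''(g)\,\Gam{1}(g,g)$ gives
\begin{equation*}
F'(s) = P_s\!\left(\frac{\Gam{1}(P_{t-s}f,\, P_{t-s}f)}{P_{t-s}f}\right).
\end{equation*}
Integrating over $s\in[0,t]$, then integrating against $p$ (so that invariance eliminates the outer $P_s$), and finally letting $t\to\infty$ (using $P_tf \to \lambda_p(f)$ by ergodicity) yields the heat-flow representation
\begin{equation*}
\Ent{p}{f} = \int_0^{\infty} \int p(x)\,\frac{\Gam{1}(P_u f,\, P_u f)(x)}{(P_u f)(x)}\,dx\,du.
\end{equation*}

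Next I would use the Bakry-\'{E}mery condition to control the integrand pointwise. The curvature hypothesis implies the strong gradient bound $\sqrt{\Gam{1}(P_u f,P_u f)} \leq e^{-cu}\,P_u\bigl(\sqrt{\Gam{1}(f,f)}\bigr)$, which follows because $s\mapsto e^{-2cs} P_s\bigl(\Gam{1}(P_{u-s}f,P_{u-s}f)\bigr)$ has non-positive derivative (precisely because $\Gam{2}-c\,\Gam{1}\geq 0$), after which one passes to square roots via Jensen's inequality for the kernel $P_s$. Combined with the Cauchy-Schwarz inequality for $P_u$ applied to the factorisation $\sqrt{\Gam{1}(f,f)} = \sqrt{f}\cdot\sqrt{\Gam{1}(f,f)/f}$, which gives $\bigl(P_u\sqrt{\Gam{1}(f,f)}\bigr)^2 \leq (P_u f)\,P_u(\Gam{1}(f,f)/f)$, this produces
\begin{equation*}
\frac{\Gam{1}(P_u f,\, P_u f)}{P_u f} \leq e^{-2cu}\, P_u\!\left(\frac{\Gam{1}(f,f)}{f}\right).
\end{equation*}
Integrating against $p$, using invariance on the right, and substituting into the heat-flow representation of $\Ent{p}{f}$ yields $\Ent{p}{f} \leq (2c)^{-1} \int p(x)\,\Gam{1}(f,f)(x)/f(x)\,dx$, which is \eqref{eq:lsicont} with $C=1/c$.

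The delicate part of this plan is the rigorous derivation of the strong gradient bound from the infinitesimal Bakry-\'{E}mery condition: one needs a stable algebra of smooth functions inside the domain of $L$ for the interpolation-in-$s$ computation to be meaningful, and then an approximation argument to extend from that algebra to general $f>0$ with $\Ent{p}{f}$ finite. Once the gradient bound is in place the remaining steps are essentially mechanical, and running the same scheme with $\phi(x) = x^2$ in place of $x\log x$ would recover the Poincar\'{e} counterpart Theorem~\ref{thm:contpoin}.
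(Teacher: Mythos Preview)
The paper does not itself prove this theorem: it is quoted from \cite[Theorem 1]{bakry} (see also \cite[Proposition 5.7.1]{bakry2}), so there is no in-paper proof to compare against. Your proposal follows exactly the semigroup route of those references, and the overall architecture --- represent $\Ent{p}{f}$ as a time integral of Fisher information along the heat flow, then use curvature to obtain exponential decay of that Fisher information --- is correct and standard.

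There is, however, a genuine gap in your derivation of the strong gradient bound. First, the sign: the map $s\mapsto e^{-2cs} P_s\bigl(\Gam{1}(P_{u-s}f,P_{u-s}f)\bigr)$ has \emph{non-negative} derivative under the Bakry--\'Emery condition, not non-positive; the computation gives $e^{-2cs}P_s\bigl(2\Gam{2}(g,g)-2c\,\Gam{1}(g,g)\bigr)\geq 0$ with $g=P_{u-s}f$. Comparing endpoints still yields the weak bound $\Gam{1}(P_u f,P_uf)\leq e^{-2cu}P_u\bigl(\Gam{1}(f,f)\bigr)$. But you cannot then ``pass to square roots via Jensen'': since $\sqrt{\,\cdot\,}$ is concave, Jensen for the Markov kernel $P_u$ gives $P_u\bigl(\sqrt{\Gam{1}(f,f)}\bigr)\leq \sqrt{P_u\bigl(\Gam{1}(f,f)\bigr)}$, which points the wrong way for deducing the strong bound $\sqrt{\Gam{1}(P_uf,P_uf)}\leq e^{-cu}P_u\bigl(\sqrt{\Gam{1}(f,f)}\bigr)$. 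The strong gradient bound does hold under the curvature condition, but it requires its own interpolation argument (differentiate $s\mapsto e^{-cs}P_s\bigl(\sqrt{\Gam{1}(P_{u-s}f,P_{u-s}f)}\bigr)$ and use the diffusion chain rule together with Cauchy--Schwarz for $\Gam{1}$); alternatively, bypass it by interpolating $s\mapsto P_s\bigl(\Gam{1}(P_{u-s}f,P_{u-s}f)/P_{u-s}f\bigr)$ directly, which under the Bakry--\'Emery condition gives the pointwise decay you actually need without ever invoking the strong bound. You were right to flag this step as the delicate one.
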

If $p$ is Gaussian with variance $\sigma^2$, since (see \cite[Example 4.18]{guionnet}), the Bakry-\'{E}mery condition holds
with constant $c=1/\sigma^2$,  we 
recover the original Poincar\'{e} inequality of Chernoff \cite{chernoff} and the
log-Sobolev inequality of Gross \cite{gross}. Indeed, if the ratio $p/\phi_{\mu,\sigma^2}$ is a log-concave function, then
this approcah shows that
the Poincare and log-Sobolev inequalities hold with constant $\sigma^2$.

\item{{\bf [Monotonicity of entropy]}} While  Brown \cite{brown} and Barron \cite{barron} exploited ideas of Stam \cite{stam}
to deduce that entropy of $(X_1 + \ldots + X_n)/\sqrt{n}$
increases in the Central Limit Theorem regime along the `powers of 2 subsequence' $n=2^k$, it remained an
open problem for many years to show that the entropy is monotonically increasing in all $n$. This conjecture may well date back to
Shannon, and was  mentioned  by Lieb \cite{lieb2}.
It was resolved by 
Artstein, Ball, Barthe and Naor \cite{artstein}, who proved a more general result using a variational characterization of 
Fisher information (introduced in \cite{ball}):
\begin{theorem}[\cite{artstein}] \label{thm:hmonotone}
Given independent continuous  $X_i$ with
finite variance, for any positive $\alpha_i$ such that 
$\sum_{i=1}^{n+1} \alpha_i =1$,
 writing $\alpha^{(j)}  = 1-\alpha_j$, then
\begin{equation} \label{eq:hmonotone}
n h \left( \sum_{i=1}^{n+1} \sqrt{\alpha_i} X_i \right) \geq \sum_{j=1}^{n+1}
\alpha^{(j)} h\left( \sum_{i \neq j} \sqrt{\alpha_i/\alpha^{(j)}} X_i \right)
.\end{equation}
\end{theorem}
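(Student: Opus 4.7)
The plan is to reduce \eqref{eq:hmonotone} to the corresponding Fisher information inequality and then lift it back via the de Bruijn identity. Write $S = \sum_{i=1}^{n+1} \sqrt{\alpha_i}\, X_i$ and, for each $j$, let
\[ T_j = \sum_{i \neq j} \sqrt{\alpha_i/\alpha^{(j)}}\, X_i, \]
so that $S = \sqrt{\alpha_j}\, X_j + \sqrt{\alpha^{(j)}}\, T_j$. The key intermediate claim, which is the main content of \cite{artstein}, is the Fisher information inequality
\[ n J(S) \;\leq\; \sum_{j=1}^{n+1} \alpha^{(j)}\, J(T_j), \]
from which \eqref{eq:hmonotone} follows by integration along a Gaussian heat flow.

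To establish the Fisher inequality I would use the variational characterization of Fisher information introduced by Ball in \cite{ball}. The score function $\rho_S(s)$ equals the conditional expectation, given $S=s$, of any suitably normalized combination of the individual scores $\rho_{X_i}(X_i)$. Applying Cauchy--Schwarz inside that conditional expectation yields an upper bound on $J(S)$ in terms of Fisher-type functionals of sub-sums. The crux is to choose the weighting so that the contribution attributed to each $X_j$ is mediated by the leave-one-out sum $T_j$; with the right choice, the coefficient of $J(T_j)$ on the right-hand side collapses to exactly $\alpha^{(j)}$, and the factor $n$ on the left is produced by the identity $\sum_j \alpha^{(j)} = n$.

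The passage from the Fisher inequality to \eqref{eq:hmonotone} proceeds by a standard heat-flow argument. Replace each $X_i$ by $X_i + \sqrt{t}\, Z_i$ for independent standard Gaussians $Z_i$; for every $t>0$ the resulting variables have smooth densities with finite Fisher information, so the inequality just proved applies to them. Because the normalizations respect $\sum_{i=1}^{n+1} \alpha_i = \sum_{i \neq j} \alpha_i/\alpha^{(j)} = 1$, the added noise appears in $S$ and in each $T_j$ as an independent Gaussian of the same variance $t$. The de Bruijn identity $\frac{d}{dt} h(Y + \sqrt{t}\, Z) = \tfrac{1}{2} J(Y + \sqrt{t}\, Z)$, which is exactly \eqref{eq:heateqn} in integrated form, converts the Fisher inequality into an inequality between derivatives of entropies. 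Integrating in $t$ from $0$ to $\infty$ yields \eqref{eq:hmonotone}, with the boundary terms at $t=\infty$ cancelling after subtraction of a common Gaussian of matching variance $\sum_i \alpha_i \var(X_i)$.

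The main obstacle is the variational step producing the Fisher inequality: identifying weights for which Cauchy--Schwarz is tight enough to deliver the precise combinatorial coefficients $n$ and $\alpha^{(j)}$ requires the clever choice introduced in \cite{ball,artstein}, and is the genuine substance of the argument. The heat-flow lift, by contrast, is largely routine, subject only to a standard truncation and monotone-convergence argument to justify integrating de Bruijn's identity down to $t=0$ under the hypothesis of merely finite variance.
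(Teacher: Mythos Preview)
The paper does not actually give its own proof of Theorem~\ref{thm:hmonotone}; it is stated as a cited result from \cite{artstein}, with only the one-line remark that it was ``proved \ldots\ using a variational characterization of Fisher information (introduced in \cite{ball})''. Your outline --- reduce to a leave-one-out Fisher information inequality via the variational characterization from \cite{ball,artstein}, then lift to entropies by the de Bruijn heat-flow argument --- is exactly the approach the paper alludes to, and is a faithful summary of the original proof in \cite{artstein}.
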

This result is referred to as monotonicity because,  choosing $\alpha_i = 1/(n+1)$ for independent and identically distributed (IID) $X_i$, \eqref{eq:hmonotone} shows that
 $h \left( \sum_{i=1}^{n} X_i/\sqrt{n} \right)$ is monotonically increasing
in $n$.  Equivalently,  relative entropy 
$D \left( \sum_{i=1}^{n} X_i/\sqrt{n} \right \| Z)$ is  monotonically decreasing
in $n$.
This means that the Central Limit Theorem can be viewed as an equivalent of the Second Law of Thermodynamics.
An alternative proof of this monotonicity result is given by Tulino and Verd\'{u} \cite{tulino}, based on the MMSE characterization
of mutual information of \cite{guo}.

The form of \eqref{eq:hmonotone} in the case $n=2$ was previously a well-known result; indeed (see for example
\cite{dembo}) it is equivalent to  Shannon's Entropy Power Inequality \cite[Theorem 15]{shannon2}.
However, 
Theorem \ref{thm:hmonotone}
 allows a stronger form of the  Entropy Power Inequality to be proved, 
referring to `leave-one-out' sums (see \cite[Theorem 3]{artstein}. This was generalized by Madiman and Barron
\cite{madiman}, who considered the entropy power of sums of arbitrary subsets of the original variables.
The more recent paper \cite{madiman6} reviews in detail relationships between the Entropy Power Inequality and results in convex geometry,
including the Brunn-Minkowski inequality.

\item{{\bf [Concavity of entropy]}}
In the setting of probability measures taking values on the real line $\re$, there has been considerable interest in optimal
transportation of measures; that is, given densities $f_0$ and $f_1$, to find a sequence of densities $f_t$ smoothly interpolating
between them in a way which minimises some appropriate cost function. For example, using the natural quadratic cost function
induces the Wasserstein distance $W_2$ between $f_0$ and $f_1$.
 This theory is extensively reviewed in the two
books by Villani \cite{villani3,villani}, even for probability measures taking values on a Riemannian manifold.
In this setting, work by authors including Lott, Sturm and Villani \cite{lott,sturm,sturm2}  relates the concavity of the entropy 
$h(f_t)$ as a function of $t$ to the Ricci curvature of the underlying manifold.

Key works in this setting include those by Benamou and Brenier \cite{benamou2,benamou} (see also \cite{carlen4}), who gave a 
variational characterization of the Wasserstein distance between $f_0$ and $f_1$. Authors such as
 Caffarelli \cite{caffarelli} and
Cordero-Erausquin \cite{cordero3} used these ideas to give new proofs of results such as  log-Sobolev 
and transport inequalities.
Concavity of entropy also plays a key role in the field of information geometry (see \cite{amari4,amari,rao}), where the Hessian of
entropy  induces a metric on the space of probability measures.
\end{enumerate}

\section{Technical definitions in the discrete case}

In the remainder of this paper, we describe results which can be seen as discrete analogues of the results of Section \ref{sec:conts}. In that section, a distinguished role
was played by the set of  Gaussian densities, a set which has attractive properties including closure under convolution and scaling, and an explicit value for their  entropies.
We argue that a similar role is played by the Poisson mass functions, a class which is preserved on convolution and thinning (see Definition \ref{def:thin} below),
 even though (see \cite{adell2}) we only have entropy bounds.

We now make some definitions which will be useful throughout the remainder of the paper. From now, we will
assume all random variables take values on the positive integers $\Z_+$. 
Given a probability mass function (distribution) $P$, we write
$\lambda_P = \sum_{x=0}^{\infty}x P(x)$ for its mean and $\var_P = \sum_{x=0}^{\infty} (x- \lambda_P)^2 P(x)$ for its
variance. 
In particular, write $\Pi_\lambda(x) = e^{-\lambda} \lambda^x/x!$ for
the mass function of a Poisson random variable with mean $\lambda$ and $B_{n,p}(x) = \binom{n}{x} p^x (1-p)^{n-x}$ for the mass function of a Binomial $\bin(n,p)$. We write $H(P)$ for the discrete entropy of a probability distribution $P$,
and interchangably write
$h(X)$ for $X \sim P$.
For any random variable $X$ we
write $P_X$, defined by $P_X(x) = \pr(X = x)$, for its  probability mass function.

Given a function $f$ and mass function $P$, we write $\lambda_{P}(f) = \sum_{x=0}^{\infty}f(x) P(x)$ for the expectation of 
$f$ with respect to $P$, write $\var_P(f) = \sum_{x=0}^{\infty}P(x) (f(x) - \lambda_{P}(f))^2$
and define 
\begin{equation} \label{eq:defent}
\Ent{P}{f} = \sum_{x=0}^{\infty}P(x) f(x) \log f(x) - \lambda_{P}(f) \log \lambda_{P}(f).\end{equation}
Note that if $f = Q/P$, where $Q$ is a probability mass function, then 
\begin{equation} \Ent{P}{f} = \sum_{x=0}^{\infty}Q(x) \log \left( \frac{Q(x)}{P(x)} \right) = D(Q \| P),\end{equation}
the relative entropy from $Q$ to $P$.
We define the operators $\Delta$ and $\Delta^*$ (which are adjoint with respect to counting measure) by
$\Delta f(x) = f(x+1) - f(x)$
and $\Delta^* f(x) = f(x-1) - f(x)$, with the convention that $f(-1) = 0$.

We now make two further key definitions, in Condition \ref{cond:ulc} we define
 the ultra-log-concave (ULC) random variables (as introduced by Pemantle
\cite{pemantle} and Liggett \cite{liggett}) and in Definition \ref{def:thin} we define the thinning operation introduced by 
R\'{e}nyi \cite{renyi4} for point processes.

\begin{condition}[ULC] \label{cond:ulc}
For any $\lambda$, define  the class of  ultra-log-concave (ULC) mass functions $P$  
$$\C{\lambda} = \{ P:
\lambda_P = \lambda \mbox{ and $P(v)/\Pi_\lambda(v)$ is log-concave} \}.$$
 Equivalently we require that
$v P(v)^2 \geq (v+1) P(v+1) P(v-1),
\mbox{ for all $v$.}$
\end{condition}
The ULC property is preserved on convolution; that is, for  $P \in \C{\lambda_P}$ and $Q \in \C{\lambda_Q}$ then $P
\star Q \in \C{\lambda_P+\lambda_Q}$ (see \cite[Theorem 1]{walkup} or
\cite[Theorem 2]{liggett}). The ULC class contains a variety of parametric families of random variables, including the Poisson and sums of independent Bernoulli variables
(Poisson--Binomial), including the binomial distribution.

A weaker condition than ULC (Condition \ref{cond:ulc}) is given in \cite{johnson38}, which corresponds to Assumption A of \cite{caputo}:
\begin{definition} Given a probability mass function $P$, write 
\begin{equation} \label{eq:EEdef} \EE(x) := \frac{ P(x)^2 - P(x-1) P(x+1)}{P(x) P(x+1)}
= \frac{P(x)}{P(x+1)} - \frac{P(x-1)}{P(x)}. \end{equation}
\end{definition}
\begin{condition}[$c$-log-concavity] \label{cond:clc}
 If $\EE(x) \geq c$ for all $x \in \Z_+$, we say that $P$ is $c$-log-concave.
\end{condition}
\cite[Section 3.2]{caputo} showed that   if $P$ is ULC then it is $c$-log-concave, with $c = P(0)/P(1)$.
In \cite[Proposition 4.2]{johnson38}  we show that DBE($c$), a discrete form of the Bakry-\'{E}mery condition, is implied by $c$-log-concavity.

Another condition  weaker than the ULC property (Condition \ref{cond:ulc}) was discussed in \cite{johnson24},
related to the definition of a size-biased
distribution. For a mass function $P$, 
we  define its size-biased version $P^*$ by $P^*(x) = (x+1) P(x+1)/\lambda_P$ (note that some authors
define this to be $x P(x)/\lambda_P$; the difference is simply an offset). 
 Now $P \in \C{\lambda_P}$ 
means that $P^*$ is  dominated by $P$ in likelihood ratio ordering (write $P^* \leq_{lr} P$). This is a stronger assumption than  that $P^*$ is
stochastically dominated by $P$ (write $P^* \leq_{st} P$) -- see \cite{shaked} for a review of stochastic ordering results.
As described in \cite{daly,johnson24,papadatos}, such orderings naturally arise in many contexts where $P$ is  the 
mass function of a sum of
negatively dependent random variables.

The other construction at the heart of this paper is the thinning operation introduced by R\'{e}nyi \cite{renyi4}:
\begin{definition} \label{def:thin}
Given probability mass function $P$, define the $\alpha$-thinned version $T_{\alpha} P$ to be
\begin{equation} \label{eq:thin}
T_{\alpha} P(x) = \sum_{y=x}^\infty \binom{y}{x} \alpha^x (1-\alpha)^{y-x} P(y). \end{equation}
Equivalently, if $Y \sim P$ then $T_\alpha P$ is the distribution of a random variable
$T_{\alpha} Y := \sum_{i=1}^{Y} B_{i},$
where $B_{1},B_{2}\ldots$  are IID Bernoulli$(\alpha)$ random variables, independent of $Y$. 
\end{definition}
A key result for Section \ref{sec:maxent} is the fact that (see \cite[Proposition 3.7]{johnson21}) if $P \in \C{\lambda_P}$ then $T_\alpha P \in \C{\alpha \lambda_P}$.
Further properties of this thinning operation are reviewed in  \cite{johnson26}, including the fact that $T_{\alpha}$
preserves several parametric families, such as the Poisson, binomial and negative binomial.
In that paper, we argue that the thinning operation
$T_\alpha$ is the discrete equivalent of scaling by
$\sqrt{\alpha}$ and that
the
`mean-preserving transform' $T_\alpha X + T_{1-\alpha} Y$ is
equivalent to the `variance-preserving transform' $\sqrt{\alpha} X +
\sqrt{1-\alpha} Y$ in the continuous case.
\section{Score function and Poisson approximation}  \label{sec:score}
A well-known phenomenon, often referred to as the `law of small numbers', states that in a triangular array of `small'
random variables, the row sums converge in distribution to the Poisson probability distribution $\Pi_\lambda$. 
For example,  consider the setting where binomial laws $\bin(n, \lambda/n) \rightarrow \Pi_\lambda$ as $n \tends$.
This problem has been extensively studied, with a variety of strong bounds proved using techniques such as the Stein--Chen method
(see for example \cite{barbour} for a summary of this technique and its applications).

Following the pioneering work of Linnik \cite{linnik} applying information theoretic ideas to normal approximation, it was natural to wonder whether  Poisson approximation could be considered in a similar framework. An early and important contribution in this direction was made by
Johnstone and MacGibbon \cite{johnstone}, who made the following natural definition (also used in \cite{kagan,papathanasiou}), which mirrors the Fisher-type expression
of \eqref{eq:lsicont}:
\begin{definition} \label{def:johnstone} Given a probability distribution $P$, define
\begin{equation} \label{eq:johnstone} I(P) := \sum_{x=0}^{\infty} P(x) \left( \frac{P(x-1)}{P(x)} - 1 \right)^2. \end{equation}
\end{definition}
Johnstone and MacGibbon showed that this quantity has a number of desirable features; first 
a Cram\'{e}r-Rao lower bound
$I(P) \geq 1/\var_P$ (see \cite[Lemma 1.2]{johnstone}), with equality if and only if $P$ is Poisson, second a 
projection identity which implies the
subadditivity
result $I( P \star Q) \leq (I(P) + I(Q))/4$ (see \cite[Proposition 2.2]{johnstone}).
 In theory, these two results should allow
us to deduce Poisson convergence in the required manner. However, there is a serious problem. Expanding \eqref{eq:johnstone}
we obtain 
\begin{equation} \label{eq:johnstone2}
I(P) = \left( \sum_{x=0}^\infty \frac{P(x-1)^2}{P(x)} \right) - 1.
\end{equation}
For example, if $P$ is a Bernoulli($p$) distribution then the bracketed sum in \eqref{eq:johnstone2}
 becomes $P(0)^2/P(1) + P(1)^2/P(2)
= (1-p)^2/p + p^2/0 = \infty$. Indeed, for any mass function $P$ with finite support $I(P) = \infty$.
To counter this problem, a new definition was made by Kontoyiannis, Harremo\"{e}s and Johnson in \cite{johnson11}.
\begin{definition}[\cite{johnson11}] \label{def:khj}
For probability mass function $P$ with mean $\lambda_P$, define the scaled score function $\rho_P$ and scaled Fisher information $K$ by
\begin{eqnarray}
\rho_P(x) & := & \frac{ (x+1) P(x+1)}{\lambda_P  P(x)} - 1,  \label{eq:scaledscore} \\
K(P) & := & \lambda_P \sum_{x=0}^\infty P(x) \rho_P(x)^2. \label{eq:scaledfisher}
\end{eqnarray}
\end{definition}
Note that this definition does not suffer from the problems of Definition \ref{def:johnstone} described above. 
\begin{example} \label{ex:bern}
For $P$ Bernoulli($p$), the scaled score function can be evaluated as $\rho_P(0) = P(1)/\lambda_P P(0) - 1 = p/(1-p)$
and $\rho_P(1) = 2 P(2)/\lambda_P P(1) - 1 = - 1$, and the scaled Fisher information as $K(P) = p^2/(1-p)$.
\end{example}
Further, $K$ retains the desirable features of Definition \ref{def:johnstone}; first positivity of the perfect square ensures that
$K(P) \geq 0$ with equality if and only if $P$ is Poisson and second a projection identity \cite[Lemma, P.471]{johnson11} ensures
that the following subadditivity result holds:
\begin{theorem}[\cite{johnson11}, Proposition 3] \label{thm:khj}
 If $S = \sum_{i=1}^n X_i$ is the sum of $n$ independent random variables $X_i$ with means $\lambda_i$ then
writing $\lambda_S = \sum_{i=1}^n \lambda_i$:
\begin{equation} \label{eq:khj}
K(P_{S}) \leq \frac{1}{\lambda_S} \sum_{i=1}^n \lambda_i K(P_{X_i}),
\end{equation}
\end{theorem}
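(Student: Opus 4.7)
The plan is to reduce \eqref{eq:khj} to a projection identity expressing $\lambda_S \rho_{P_S}$ as a conditional expectation of a weighted sum of the individual scaled scores, and then to invoke the ``conditioning reduces variance'' principle together with independence of the $X_i$. Concretely, the target identity is
\[
\lambda_S \rho_{P_S}(s) \;=\; \ep\!\left[ \sum_{i=1}^n \lambda_i \rho_{P_{X_i}}(X_i) \,\Big|\, S = s \right].
\]

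To establish this, I would use the size-biased mass functions $P_{X_i}^*(x) := (x+1) P_{X_i}(x+1)/\lambda_i$, so that $1 + \rho_{P_{X_i}}(x) = P_{X_i}^*(x)/P_{X_i}(x)$. Expanding $(s+1) P_S(s+1) = \sum_{\vc{k}: |\vc{k}|=s+1} \bigl(\sum_i k_i\bigr) \prod_j P_{X_j}(k_j)$, distributing the factor $s+1 = \sum_i k_i$, and re-indexing $k_i \mapsto k_i - 1$ within each $i$-th summand yields
\[
(s+1) P_S(s+1) \;=\; \sum_{i=1}^n \lambda_i \, P_{S_i^*}(s), \qquad S_i^* := X_i^* + \sum_{j \neq i} X_j,
\]
with $X_i^* \sim P_{X_i}^*$ independent of $\{X_j\}_{j \neq i}$. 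Dividing through by $\lambda_S P_S(s)$ and observing that $P_{S_i^*}(s)/P_S(s) = \ep[P_{X_i}^*(X_i)/P_{X_i}(X_i) \mid S=s]$ (an immediate consequence of Bayes' rule for the conditional law of $X_i$ given $S$), the left-hand side becomes $1 + \rho_{P_S}(s)$ and the right-hand side becomes $1 + \lambda_S^{-1} \ep[\sum_i \lambda_i \rho_{P_{X_i}}(X_i) \mid S=s]$, delivering the identity.

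With $Z := \sum_{i=1}^n \lambda_i \rho_{P_{X_i}}(X_i)$, the identity together with conditional Jensen gives
\[
K(P_S) \;=\; \lambda_S \, \ep[\rho_{P_S}(S)^2] \;=\; \frac{1}{\lambda_S} \, \ep\!\left[\ep[Z \mid S]^2\right] \;\leq\; \frac{1}{\lambda_S} \, \ep[Z^2].
\]
A direct computation from \eqref{eq:scaledscore} shows $\ep[\rho_{P_{X_i}}(X_i)] = \sum_x (x+1) P_{X_i}(x+1)/\lambda_i - 1 = 0$, so $\ep[Z] = 0$ and hence $\ep[Z^2] = \var(Z)$. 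Independence of the $X_i$ then yields $\var(Z) = \sum_i \lambda_i^2 \, \ep[\rho_{P_{X_i}}(X_i)^2] = \sum_i \lambda_i K(P_{X_i})$, which on substitution is exactly \eqref{eq:khj}.

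The main obstacle is the projection identity itself. The combinatorial manipulation is clean in outline but requires care: one must split the weight $s+1 = \sum_i k_i$ correctly, shift each coordinate independently, and verify that the resulting convolution genuinely realises $S_i^*$ with $X_i^*$ independent of the other summands rather than some variant in which the shift has smuggled in a spurious dependence. Once the identity is in hand, everything downstream is a routine application of familiar expectation/variance manipulations and introduces no further difficulty.
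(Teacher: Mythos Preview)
Your proposal is correct and follows precisely the approach the paper points to: it establishes the projection identity \cite[Lemma, p.~471]{johnson11} expressing $\lambda_S\rho_{P_S}(S)$ as the conditional expectation $\ep[\sum_i \lambda_i \rho_{P_{X_i}}(X_i)\mid S]$, and then deduces \eqref{eq:khj} from the contraction property of conditional expectation together with independence. The size-biasing derivation you sketch is exactly the standard route to that identity, and the remaining variance computation is routine.
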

As discussed in \cite{johnson11}, this result  implies Poisson convergence at rates of an optimal order,
 in a variety of senses including
Hellinger distance and total variation. However, perhaps the most interesting consequence comes via the following modified logarithmic
Sobolev inequality of Bobkov and Ledoux \cite{bobkov3}, proved via a tensorization argument:
\begin{theorem}[\cite{bobkov3}, Corollary 4]  \label{thm:bobled}
 For any function $f$ taking positive values:
\begin{equation} \label{eq:bobled}
\Ent{\Pi_\lambda}{f} \leq \lambda \sum_{x=0}^\infty \Pi_{\lambda}(x) \frac{ \left( f(x+1) - f(x) \right)^2}{f(x)}.
\end{equation}
\end{theorem}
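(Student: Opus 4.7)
The plan is to follow a tensorization argument, realising $\Pi_\lambda$ as the limit of sums of small Bernoullis. Three ingredients are needed: a two-point (Bernoulli) version of the inequality, the tensorization property of entropy on product measures, and the classical law of small numbers.

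First I would establish a two-point inequality: for $p \in [0,1]$ and any positive $g$ on $\{0,1\}$,
\begin{equation*}
\Ent{\mathrm{Ber}(p)}{g} \leq p\,\frac{(g(1)-g(0))^2}{g(0)}.
\end{equation*}
By homogeneity one may set $g(0)=1$, $g(1)=b$; defining $h(p) := pb\log b - \bar g\log \bar g - p(b-1)^2$ with $\bar g = 1+p(b-1)$, a direct computation gives $h(0)=0$ and $h''(p) = -(b-1)^2/\bar g \le 0$, so $h$ is concave in $p$. Verifying $h'(0) = b\log b - (b-1) - (b-1)^2 \le 0$ for all $b>0$ is an elementary one-variable check (e.g.\ by noting that the function vanishes at $b=0,1$ and is concave for $b>1/2$), and forces $h(p) \le 0$ on $[0,1]$.

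Next I would invoke the standard tensorization of entropy: for independent $X_1,\ldots,X_n$ with product law $\mu = P_1 \otimes \cdots \otimes P_n$ and positive $F$,
\begin{equation*}
\Ent{\mu}{F} \leq \sum_{i=1}^n \ep\big[\Ent{P_i}{F(X_{-i},\cdot)}\big],
\end{equation*}
and specialise to IID $X_i \sim \mathrm{Ber}(\lambda/n)$, $S_n = X_1+\cdots+X_n$, and $F(x_1,\ldots,x_n) = f(x_1+\cdots+x_n)$. The left-hand side then equals $\Ent{P_{S_n}}{f}$. For fixed $X_{-i}$ with sum $s$, the two values of $F$ in the $i$-th slot are $f(s)$ and $f(s+1)$, so the two-point bound with $p = \lambda/n$ applies to each summand. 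After summing over $i$ and using symmetry to replace the $n$-fold sum by $n$ copies of a single expectation (so that $n\cdot (\lambda/n)$ collapses to $\lambda$),
\begin{equation*}
\Ent{P_{S_n}}{f} \leq \lambda\,\ep\!\left[\frac{(f(S_{n-1}+1) - f(S_{n-1}))^2}{f(S_{n-1})}\right],
\end{equation*}
where $S_{n-1}$ is a binomial sum of $n-1$ independent $\mathrm{Ber}(\lambda/n)$. Since $S_{n-1} \Rightarrow \Pi_\lambda$ by the law of small numbers, letting $n\to\infty$ yields \eqref{eq:bobled}.

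The main obstacle is the passage to the limit: convergence in distribution alone is insufficient, since one needs uniform integrability of both the entropy functional on the left and the Fisher-type expression on the right. The standard remedy is to prove the inequality first for $f$ that is bounded, bounded away from $0$, and of finite support, and then extend to general positive $f$ by truncation and monotone convergence. It is also essential that the two-point bound carries the coefficient $p$ rather than $p(1-p)$, since otherwise the $n\to\infty$ rescaling would not produce the correct prefactor $\lambda$ on the right-hand side of \eqref{eq:bobled}.
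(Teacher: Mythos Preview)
The paper does not give its own proof of this theorem; it is quoted directly from Bobkov and Ledoux \cite{bobkov3}, with the passing remark that it is ``proved via a tensorization argument''. Your proposal reconstructs precisely that argument---the two-point Bernoulli inequality, tensorization of entropy over the product measure, and passage to the Poisson limit via the law of small numbers---and is correct, including your handling of the limit by first restricting to bounded $f$ bounded away from zero.

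One small simplification: your verification that $h'(0) = b\log b - (b-1) - (b-1)^2 \le 0$ can be done in one line, since $(b-1)+(b-1)^2 = b(b-1)$, so the claim is $b\log b \le b(b-1)$, i.e.\ $\log b \le b-1$. Your sketched justification (vanishing at $b=0,1$ and concavity for $b>1/2$) does not by itself cover the interval $(0,1/2)$, but the one-line argument above closes this.
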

Observe that (see \cite[Proposition 2]{johnson11}) taking $f = P/\Pi_{\lambda}$ for some probability mass function $P$, 
the RHS of \eqref{eq:bobled} can be written as 
\begin{equation}
\lambda \sum_{x=0}^\infty \Pi_{\lambda}(x) f(x) \left( \frac{f(x+1)}{f(x)} - 1 \right)^2
= \lambda \sum_{x=0}^\infty P(x) \left( \frac{(x+1) P(x+1)}{\lambda_P P(x)} - 1 \right)^2,
\end{equation}
and so \eqref{eq:bobled} becomes
$ D( P \| \Pi_{\lambda})  \leq  K(P)$. Hence, combining Theorems \ref{thm:khj} and \ref{thm:bobled}, we deduce convergence
in the strong sense of relative entropy in a general setting.
\begin{example} (See \cite[Example 1]{johnson11}) Combining Example \ref{ex:bern} and Theorem \ref{thm:bobled} we deduce
that
\begin{equation} D( B_{n,\lambda/n} \| \Pi_{\lambda}) \leq \frac{\lambda^2}{n(n- \lambda)}, \end{equation}
so via Pinsker's inequality \cite{kullback} we deduce that 
\begin{equation} \| B_{n,\lambda/n} - \Pi_{\lambda} \|_{TV} \leq (2+ \epsilon) \frac{\lambda}{n}, \end{equation}
giving the same order of convergence (if not the optimal constant) as results stated in \cite{barbour}.
\end{example}

We make the following brief remarks, the first of which
 will be useful in the proof of the Poisson maximum entropy property in Section \ref{sec:maxent}:
\begin{remark} \label{rem:ulcequiv}
An equivalent formulation of the ULC property of Condition \ref{cond:ulc} is that $P \in \C{\lambda_P}$ if and only if
 the scaled score function
$\rho_P(x)$ is a decreasing function of $x$. \end{remark}
\begin{remark}
In \cite{johnson22}, definitions of score functions and subadditive inequalities are extended to the compound Poisson setting.
The resulting 
compound Poisson approximation
bounds are close to the strongest known results, due to authors such as Roos 
\cite{roos}.
\end{remark}
\begin{remark} The fact that two competing definitions
 of the score function and Fisher information are possible (see Definition \ref{def:johnstone} and Definition \ref{def:khj})
corresponds to the two definitions of the Stein operator in \cite[Section 1.2]{ley2}. \end{remark}

\section{Poisson maximum entropy property} \label{sec:maxent}

We next  prove a maximum entropy property for the Poisson distribution.
The first results in this direction were proved by Shepp and Olkin \cite{shepp} and by Mateev \cite{mateev}, who showed that
entropy is maximised in the class of Poisson-binomial variables with mean $\lambda$ by the binomial $\bin(n, \lambda/n)$ distribution.
Using a limiting argument, Harremo\"{e}s \cite{harremoes} showed that the Poisson $\Pi_{\lambda}$
 maximises the entropy in the closure of the set of Poisson-binomial variables with mean $\lambda$ (though
note that the Poisson itself does not lie in this set).

Johnson  \cite{johnson21} gave the following argument, based on the idea of `smart paths'. 
For some function $\Lambda(\alpha)$, surprisingly,  it can be easiest to prove $\Lambda(1) \leq \Lambda(0)$ by proving the 
stronger result  that $\Lambda'(\alpha) \leq 0$ for all $\alpha \in [0,1]$.
The key idea in this case
 is to introduce an interpolation between probability measures using the thinning operation of Definition \ref{def:thin}, and to show that it satisfies a partial differential
equation corresponding to the action of the $M/M/\infty$ queue (see also \cite{chafai}).

\begin{lemma} \label{prop:heateqn}
Write $P_\alpha(z) = \pr( T_\alpha X + T_{1-\alpha} Y = z)$,
where $X \sim P_X$ with mean $\lambda$  and $Y \sim \Pi_{\lambda}$
then (in a form reminiscent of \eqref{eq:heateqn} above):
\begin{eqnarray} 
\frac{\partial }{\partial \alpha} P_{\alpha}(x)
& = & \frac{\lambda}{\alpha} \Delta^* (P_{\alpha}(x) \rho_{\alpha}(x)), \label{eq:heateqn2}
\end{eqnarray}
where $\rho_{\alpha} := \rho_{P_\alpha}$ is the score  in the sense of Definition \ref{def:khj} above.
\end{lemma}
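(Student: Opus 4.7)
The plan is to recognize $P_\alpha$ as the time-$t$ marginal, under the change of variable $\alpha = e^{-t}$, of the $M/M/\infty$ queue with arrival rate $\lambda$, unit service rate, and initial distribution $P_X$; the PDE \eqref{eq:heateqn2} then falls out of its Kolmogorov forward equation. I will first observe that thinning preserves the Poisson family, so $T_{1-\alpha}Y \sim \Pi_{(1-\alpha)\lambda}$ when $Y \sim \Pi_\lambda$, and hence $P_\alpha = (T_\alpha P_X) \star \Pi_{(1-\alpha)\lambda}$ has mean $\lambda$ for every $\alpha \in (0,1]$. In particular, Definition \ref{def:khj} yields
\[
P_\alpha(x)\rho_\alpha(x) = \frac{(x+1)P_\alpha(x+1)}{\lambda} - P_\alpha(x),
\]
which is the object whose discrete derivative we must match.

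Next I would invoke the standard $M/M/\infty$ coupling: each of the $X$ initial customers survives to time $t$ independently with probability $e^{-t}$, contributing $T_{e^{-t}}X$, while the Poisson arrival stream (thinned by the individual exponential lifetimes) contributes an independent $\Pi_{\lambda(1-e^{-t})}$ count. This confirms that $P_\alpha$ with $\alpha = e^{-t}$ is precisely the marginal distribution of the queue length. Its Kolmogorov forward (master) equation reads
\[
\frac{\partial P_t(x)}{\partial t} = \lambda P_t(x-1) - (\lambda+x) P_t(x) + (x+1)P_t(x+1),
\]
and the chain rule $\partial/\partial\alpha = -\alpha^{-1}\partial/\partial t$ transforms this to
\[
\frac{\partial P_\alpha(x)}{\partial \alpha} = \frac{1}{\alpha}\bigl[(\lambda+x)P_\alpha(x) - \lambda P_\alpha(x-1) - (x+1)P_\alpha(x+1)\bigr].
\]
A short rearrangement using the formula for $P_\alpha\rho_\alpha$ displayed above shows that this right-hand side equals $(\lambda/\alpha)\Delta^*(P_\alpha\rho_\alpha)(x)$; the boundary convention $P_\alpha(-1)=0$ absorbs the $x=0$ case automatically.

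The main obstacle is justifying the $M/M/\infty$ identification cleanly. If one prefers a purely algebraic derivation, an alternative is to differentiate \eqref{eq:thin} directly: using $(y-k)\binom{y}{k}=(k+1)\binom{y}{k+1}$ one obtains $\partial_\alpha T_\alpha P_X(k) = \alpha^{-1}[k\,T_\alpha P_X(k) - (k+1)T_\alpha P_X(k+1)]$, and coupled with $\partial_\alpha \Pi_{(1-\alpha)\lambda}(j) = \lambda[\Pi_{(1-\alpha)\lambda}(j) - \Pi_{(1-\alpha)\lambda}(j-1)]$ and the convolution $P_\alpha = T_\alpha P_X \star \Pi_{(1-\alpha)\lambda}$, one recovers the same master equation after routine relabelling. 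Differentiation under the sums is legitimate because the coefficients are polynomial in $\alpha$ with tails that converge uniformly on compact subsets of $(0,1)$.
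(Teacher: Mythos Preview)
Your proof is correct. The paper does not actually give a proof of this lemma --- it is a review, and the result is quoted from \cite{johnson21} --- but the text immediately preceding the lemma explicitly describes the interpolation as satisfying ``a partial differential equation corresponding to the action of the $M/M/\infty$ queue (see also \cite{chafai})'', which is exactly your primary route; your direct-differentiation alternative is equally valid and is essentially how the identity is established in the original source.
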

The gradient form of the RHS of this partial derivative is key  for us, and is reminiscent of the continuous probability models studied in \cite{ambrosio}.
Such representations will lie at the heart of the analysis of entropy  in the Shepp--Olkin setting \cite{johnson34,johnson36,shepp}
 in Section \ref{sec:so}.

\begin{theorem} \label{thm:maxent}
If $P \in \C{\lambda}$ then
\begin{equation} \label{eq:maxent} H(P) \leq H(\Pi_{\lambda}),\end{equation}
with equality if and only if $P \equiv \Pi_{\lambda}$.
\end{theorem}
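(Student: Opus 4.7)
The plan is to use the smart-path interpolation: define $P_\alpha$ as the mass function of $T_\alpha X + T_{1-\alpha} Y$ with $X\sim P$ and $Y\sim \Pi_\lambda$, so that $P_0 = \Pi_\lambda$ and $P_1 = P$, and show $\frac{d}{d\alpha} H(P_\alpha) \leq 0$ for $\alpha \in (0,1]$, then integrate. Since $P \in \C{\lambda}$, by the thinning result cited just after Definition \ref{def:thin} we have $T_\alpha P \in \C{\alpha\lambda}$, and $\Pi_\lambda \in \C{\lambda}$ trivially; by closure of \C{\cdot} under convolution we obtain $P_\alpha \in \C{\lambda}$ for every $\alpha$. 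Hence by Remark \ref{rem:ulcequiv} the scaled score $\rho_\alpha := \rho_{P_\alpha}$ is a decreasing function of $x$.

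Next, I compute the derivative. Starting from $H(P_\alpha) = -\sum_x P_\alpha(x)\log P_\alpha(x)$ and noting that $\sum_x \partial_\alpha P_\alpha(x) = 0$, I apply Lemma \ref{prop:heateqn} and use the adjointness of $\Delta$ and $\Delta^*$ to write
\begin{equation*}
\frac{d}{d\alpha} H(P_\alpha) = -\frac{\lambda}{\alpha}\sum_{x} P_\alpha(x)\, \rho_\alpha(x)\, \Delta \log P_\alpha(x).
\end{equation*}
Using $\Delta\log P_\alpha(x) = \log\bigl(P_\alpha(x{+}1)/P_\alpha(x)\bigr) = \log\lambda - \log(x{+}1) + \log(1+\rho_\alpha(x))$, and the identity $\sum_x P_\alpha(x)\rho_\alpha(x)=0$ (which kills the $\log\lambda$ term since $\lambda_{P_\alpha}=\lambda$), the derivative reduces to
\begin{equation*}
\frac{d}{d\alpha} H(P_\alpha) = -\frac{\lambda}{\alpha}\Bigl[\underbrace{\sum_{x} P_\alpha(x)\,\rho_\alpha(x)\log(1+\rho_\alpha(x))}_{=: A} \;-\; \underbrace{\sum_{x} P_\alpha(x)\,\rho_\alpha(x)\log(x{+}1)}_{=: B}\Bigr].
\end{equation*}

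The desired inequality becomes $A - B \geq 0$. Term $A$ is non-negative by the elementary pointwise bound $u\log(1+u)\geq 0$ valid for all $u>-1$. For $-B$, I exploit that $\rho_\alpha$ is decreasing with zero mean under $P_\alpha$: there exists a crossover $x^*$ with $\rho_\alpha(x)\geq 0$ for $x\leq x^*$ and $\rho_\alpha(x)\leq 0$ for $x>x^*$. Since $-\log(x+1)$ is also monotone decreasing, the products $\rho_\alpha(x)\bigl[(-\log(x+1))-(-\log(x^*+1))\bigr]$ are pointwise non-negative; summing against $P_\alpha(x)$ and using $\sum_x P_\alpha(x)\rho_\alpha(x)=0$ gives $-B \geq 0$. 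Therefore $\frac{d}{d\alpha}H(P_\alpha)\leq 0$, and integrating from $0$ to $1$ yields $H(P)\leq H(\Pi_\lambda)$.

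The main obstacle, and the reason the ULC hypothesis enters essentially rather than cosmetically, is controlling the term $B$: without monotonicity of $\rho_\alpha$ one cannot pair the signs of $\rho_\alpha(x)$ and $-\log(x+1)$ to get a clean sign. For equality, both $A=0$ and $-B=0$ at every $\alpha$ must hold; vanishing of $A$ forces $\rho_\alpha(x)=0$ for every $x$ in the support, which by definition of $\rho_\alpha$ characterizes $P_\alpha = \Pi_\lambda$, and specializing at $\alpha=1$ gives $P\equiv \Pi_\lambda$.
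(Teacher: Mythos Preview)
Your proof is correct and shares the same backbone as the paper's: the same interpolation $P_\alpha$, the same use of Lemma~\ref{prop:heateqn} together with the adjointness of $\Delta$ and $\Delta^*$, and the same key step that $-B = \sum_x P_\alpha(x)\,\rho_\alpha(x)\,(-\log(x+1)) \ge 0$ because $\rho_\alpha$ and $-\log(x+1)$ are both decreasing (the paper phrases this as Chebyshev's other inequality). The tactical difference is in the choice of functional along the path. The paper differentiates the cross-entropy $\Lambda(\alpha) = -\sum_x P_\alpha(x)\log\Pi_\lambda(x)$, so only the term $B$ appears, and the gap between $H(P)$ and $\Lambda(1)$ is closed at the endpoint by positivity of relative entropy; this makes the equality case immediate, since equality in $H(P) \le \Lambda(1)$ already forces $P = \Pi_\lambda$. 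You instead differentiate $H(P_\alpha)$ directly, which produces the extra term $A = \sum_x P_\alpha(x)\,\rho_\alpha(x)\log(1+\rho_\alpha(x))$, but this is harmless since $u\log(1+u)\ge 0$ pointwise. Your route buys the slightly stronger intermediate statement that $H(P_\alpha)$ itself is monotone along the whole interpolation, at the cost of a marginally more delicate equality analysis (you need $A=0$ and $-B=0$ separately, and care at $\alpha=1$ when $P$ has finite support; taking $\alpha\uparrow 1$ through values where $P_\alpha$ has full support resolves this).
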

\begin{proof}
We consider the functional $\Lambda(\alpha) = -\sum_{x=0}^{\infty}P_\alpha(x) \log \Pi_{\lambda}(x)$ as a function of $\alpha$.
Using Lemma \ref{prop:heateqn} and  the adjoint property of $\Delta$ and $\Delta^*$   we deduce that
\begin{eqnarray}
\Lambda'(\alpha) = - \sum_{x=0}^{\infty} \frac{\partial}{\partial \alpha} P_\alpha(x) \log 
\Pi_{\lambda}(x)  
& = &  - \frac{\lambda}{\alpha}\sum_{x=0}^{\infty} \Delta^* (P_{\alpha}(x) \rho_{\alpha}(x))
 \log \Pi_{\lambda}(x) \nonumber \\
& =  & \frac{\lambda}{\alpha} \sum_{x=0}^{\infty}P_{\alpha}(x) \rho_{\alpha}(x) \log \left( \frac{x+1}{\lambda}\right). \label{eq:derivative}
\end{eqnarray}
Now, the assumption that $P \in \C{\lambda}$ means that $P_\alpha \in \C{\lambda}$ so by Remark \ref{rem:ulcequiv},
the score function $\rho_{\alpha}(z)$ is decreasing in $z$.
Hence \eqref{eq:derivative} is the covariance of an increasing and a decreasing function, so by `Chebyshev's other inequality' (see \cite[Equation (1.7)]{kingman}) it is negative.
 
In other words, $X \in \C{\lambda}$ 
makes $\Lambda'(\alpha) \leq 0$ so $\Lambda(\alpha)$ is  a decreasing function in
$\alpha$.  Since $P_0 = \Pi_{\lambda}$, and $P_1 = P$, we deduce
that
\begin{eqnarray} H(X) & \leq & 
-\sum_{x=0}^{\infty}P(x) \log \Pi_{\lambda}(x)  \label{eq:posrelent} \\ 
& = & \Lambda(1) \leq \Lambda(0) = - \sum_{x=0}^{\infty}\Pi_{\lambda}(x) 
\log \Pi_{\lambda}(x) = H(\Pi_{\lambda}), \nonumber
\end{eqnarray}
and the result is proved.
Here \eqref{eq:posrelent} follows by the positivity of relative entropy.
\end{proof}

\begin{remark}
These ideas were developed by Yu \cite{yu}, who proved that for any $n,\lambda$ 
the binomial  mass function $B_{n,\lambda/n}$ maximises entropy in the class of mass functions $P$ such that $P/B_{n,\lambda/n}$ is log-concave (this class
is referred to as the ULC($n$) class by \cite{pemantle} and \cite{liggett}).
\end{remark}

\begin{remark}
A related extension was given in the compound Poisson case in \cite{johnson23}, one assumption of which was removed by Yu \cite{yu3}. Yu's result \cite[Theorem 3]{yu3} showed that (assuming it is log-concave)
the compound Poisson distribution $CP(\lambda,Q)$ is maximum entropy among all distributions with `claim number distribution' in $\C{\lambda}$ and given `claim size distribution' $Q$.
Yu \cite[Theorem 2]{yu3} also proved a corresponding result for compound binomial distributions.
\end{remark}

\begin{remark}
The assumption in Theorem \ref{thm:maxent} that $P \in \C{\lambda_P}$ was weakened in recent work of Daly \cite[Corollary 2.3]{daly}, who proved the same result assuming that $P^* \leq_{st} P$.\end{remark}

\section{Poincar\'{e} and log-Sobolev inequalities} \label{sec:poincare}
We give a definition which mimics Definition \ref{def:poincont} in the discrete case:
\begin{definition} \label{def:poindisc}
We say that a probability mass function $P$ has Poincar\'{e} constant $R_P$, if for all sufficiently well-behaved functions $g$,
\begin{equation} \label{eq:poindisc} \var_P(g) \leq R_P \sum_{x=0}^{\infty}P(x) \left( \Delta g(x) \right)^2. \end{equation}
\end{definition}
\begin{example} \label{ex:klaassen}
Klaassen's paper \cite{klaasen} (see also \cite{cacoullos})  showed that the Poisson random variable $\Pi_\lambda$ has Poincar\'{e} constant
$\lambda$. As in Example \ref{ex:chernoff}, taking $g(t) = t$, we deduce that $R_P \geq \var_P$ for all $P$ with finite variance.
The result of Klaassen can also be proved by  expanding in Poisson--Charlier polynomials (see \cite{szego}), to mimic
the Hermite polynomial expansion of Example \ref{ex:chernoff}.
\end{example}

However, we would like to generalize and extend Klaassen's work, in the spirit of Theorems 
\ref{thm:contpoin} and \ref{thm:contlsi} (originally due to
Bakry and \'{E}mery \cite{bakry}). Some results in this direction were given by Chafa\"{i} \cite{chafai}, however
we briefly summarize two more recent approaches, using ideas described in two separate papers.

In \cite[Theorem 1.1]{johnson24},
using a construction based on  Klaassen's original paper \cite{klaasen}, it was proved that $R_P \leq \lambda_P$,
assuming that $P^* \leq_{st} P$. Recall (see \cite{shaked}) that this implies $P^*  \leq_{lr} P$, which is a restatement of
the ULC property (Condition \ref{cond:ulc}). We deduce the
following result \cite[Corollary 2.4]{johnson24}:
\begin{theorem} \label{thm:daly}
For $P \in \C{\lambda}$:
$$ \var_P \leq R_P \leq \lambda_P.$$
\end{theorem}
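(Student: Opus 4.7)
The lower bound $\var_P \leq R_P$ is immediate from Definition \ref{def:poindisc}: taking the test function $g(x)=x$ gives $\Delta g \equiv 1$, so the right-hand side of \eqref{eq:poindisc} collapses to $R_P\cdot 1 = R_P$ while the left-hand side equals $\var_P$ by definition. This is the exact discrete analogue of the observation in Example \ref{ex:chernoff}.

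The real content is the upper bound $R_P \leq \lambda_P$, for which the plan is to prove
\begin{equation*}
\var_P(g) \leq \lambda_P \sum_{x=0}^{\infty} P(x) (\Delta g(x))^2
\end{equation*}
for every well-behaved $g$, following the Klaassen-style covariance method of \cite{klaasen}. After reducing to the case $\lambda_P(g) = 0$, I would use the adjointness of $\Delta$ and $\Delta^*$ together with the size-biasing identity $(x+1)P(x+1) = \lambda_P P^*(x)$ to derive a representation
\begin{equation*}
\var_P(g) = \sum_{x=0}^{\infty} P(x) (\Delta g(x))^2\, w_P(x),
\end{equation*}
in which the weights $w_P(x)$ are built from tail sums of $P$ and of $P^*$.

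The decisive step is then to show that $w_P(x) \leq \lambda_P$ for every $x$. Here the hypothesis $P \in \C{\lambda}$ enters: as recalled in the discussion after Condition \ref{cond:ulc}, ultra-log-concavity is equivalent to $P^* \leq_{lr} P$, which implies the weaker stochastic ordering $P^* \leq_{st} P$. Invoking the coupling characterization of this ordering produces $X \sim P$ and $X^* \sim P^*$ on a common probability space with $X^* \leq X$ almost surely, and this coupling is exactly what is needed to dominate each weight $w_P(x)$ by $\ep_P X = \lambda_P$ term by term. In the Poisson prototype $P = \Pi_{\lambda}$ one has $P^* = P$, and $w_P \equiv \lambda$ identically, recovering Klaassen's equality in Example \ref{ex:klaassen}.

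The main obstacle I anticipate is setting up the covariance representation so that every weight $w_P(x)$ is both non-negative and admits a probabilistic interpretation compatible with the stochastic dominance $P^* \leq_{st} P$; a naive summation by parts produces signed boundary corrections that must be absorbed before any term-by-term comparison against $\lambda_P$ can be made. Once that algebraic identity is in place, the remaining inequality is essentially a direct application of the coupling supplied by the ULC assumption.
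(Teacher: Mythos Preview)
Your plan matches the paper's own account: it does not give a self-contained proof but attributes the result to \cite[Theorem 1.1, Corollary 2.4]{johnson24}, describing that argument precisely as ``a construction based on Klaassen's original paper'' combined with the ordering $P^*\leq_{st}P$, and the lower bound is exactly Example~\ref{ex:klaassen}.

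One correction worth flagging: the display you write as an identity,
\[
\var_P(g) \;=\; \sum_{x\geq 0} P(x)\,(\Delta g(x))^2\, w_P(x),
\]
cannot hold with equality for any single weight $w_P$ valid for all $g$ (already testing against step functions and against $g(x)=x$ forces incompatible values). The Klaassen step is an \emph{inequality}: one writes $g(x)-\lambda_P(g)$ as $\sum_k \Delta g(k)$ against centred indicators of the events $\{X>k\}$, squares, takes expectation, and applies Cauchy--Schwarz to the resulting bilinear form. The weight that emerges is
\[
P(x)\,w_P(x)\;=\;\sum_{y>x}(y-\lambda_P)P(y)\;=\;\lambda_P\bigl[\pr(X^*\geq x)-\pr(X>x)\bigr],
\]
which is automatically non-negative and satisfies $w_P(x)\leq\lambda_P$ exactly when $\pr(X^*\geq x)\leq\pr(X\geq x)$, i.e.\ when $P^*\leq_{st}P$. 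So the obstacle you anticipate dissolves once you accept an inequality at the representation step rather than an exact decomposition; after that, your stochastic-ordering comparison goes through verbatim.
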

The second approach, appearing in \cite{johnson38}, is based on a birth-and-death Markov chain construction introduced by
Caputo, Dai Pra and Posta \cite{caputo}, generalizing the thinning-based interpolation of Lemma \ref{prop:heateqn}. In
\cite{johnson38}, ideas based on the Bakry-\'{E}mery 
calculus are used to prove the following  two main results, taken from \cite[Theorem 1.5]{johnson38}
and \cite[Theorem 1.3]{johnson38}, which correspond to 
Theorems \ref{thm:contpoin} and \ref{thm:contlsi} respectively.

\begin{theorem}[Poincar\'{e} inequality] \label{thm:poincare}
Any  probability mass function $P$ whose support is the whole of the positive integers $\Z_+$ and which
satisfies the $c$-log-concavity condition  (Condition \ref{cond:clc}) has Poincar\'{e} constant $1/c$.
\end{theorem}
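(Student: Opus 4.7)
The plan is to mimic the proof of Theorem \ref{thm:contpoin} (Bakry--\'{E}mery in the continuous case) using the discrete calculus developed around the birth-and-death chain of Caputo, Dai Pra and Posta \cite{caputo}. Concretely, I would associate to $P$ the reversible continuous-time birth-and-death chain on $\Z_+$ with unit birth rates and death rates $\mu(x) = P(x-1)/P(x)$, so that detailed balance $P(x) = P(x+1)\mu(x+1)$ holds and $P$ is the unique stationary distribution (we need full support on $\Z_+$ exactly for these rates to be well-defined). Write $\mathcal{L}$ for its generator and $(P_t)_{t\geq 0}$ for the associated semigroup.

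Next I would introduce the carr\'{e} du champ $\Gam{1}(f,f) = \tfrac{1}{2}(\mathcal{L}f^2 - 2 f\mathcal{L}f)$ and its iterate $\Gam{2}(f,f) = \tfrac{1}{2}(\mathcal{L}\Gam{1}(f,f) - 2 \Gam{1}(f,\mathcal{L}f))$. A direct computation shows that after integrating against $P$ and using detailed balance,
\begin{equation*}
\sum_{x=0}^{\infty} P(x) \Gam{1}(f,f)(x) = \sum_{x=0}^{\infty} P(x) (\Delta f(x))^2,
\end{equation*}
so the right-hand side of the desired Poincar\'{e} inequality \eqref{eq:poindisc} is precisely the Dirichlet form associated to $\mathcal{L}$.

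The core step is to verify the discrete Bakry--\'{E}mery condition DBE($c$), namely $\Gam{2}(f,f) \geq c \, \Gam{1}(f,f)$ pointwise (in the averaged sense used in \cite{johnson38}). This is exactly \cite[Proposition 4.2]{johnson38}, whose proof expands $\Gam{2}$ and recognises the crucial non-negative contribution as being proportional to $\EE(x) \geq c$ by the $c$-log-concavity hypothesis \eqref{eq:EEdef}. Granting DBE($c$), the standard semigroup argument concludes: one has $\Gam{1}(P_t f, P_t f) \leq e^{-2ct} P_t \Gam{1}(f,f)$, and then
\begin{equation*}
\var_P(f) = -\int_0^\infty \frac{d}{dt} \sum_{x} P(x) (P_t f)(x)^2 \, dt = 2 \int_0^\infty \sum_{x} P(x) \Gam{1}(P_t f, P_t f)(x) \, dt \leq \frac{1}{c} \sum_{x} P(x) (\Delta f(x))^2,
\end{equation*}
which is exactly the Poincar\'{e} inequality with constant $1/c$.

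The main obstacle I expect is not the semigroup argument, which is formal once DBE($c$) is available, but rather the careful bookkeeping needed to establish DBE($c$) in the discrete setting: the expansion of $\Gam{2}$ produces several cross-terms involving $\mu(x)$, $\mu(x+1)$ and discrete differences of $f$, and one has to identify the combination that equals $\EE(x)\cdot\Gam{1}(f,f)$ modulo manifestly non-negative remainders. The full-support assumption is essential here to avoid degeneracies at the boundary $x=0$ (where $\mu(0)=0$) and to justify the summation-by-parts manipulations needed to turn $\sum_x P(x)\Gam{1}(P_tf,P_tf)$ into the Dirichlet form without boundary defects.
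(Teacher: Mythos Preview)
Your approach is the same as the one the paper describes (the paper itself does not give a self-contained proof but attributes it to \cite[Theorem 1.5]{johnson38}): associate to $P$ the Caputo--Dai Pra--Posta birth-and-death chain, verify the discrete Bakry--\'{E}mery condition DBE($c$) from $c$-log-concavity via \cite[Proposition 4.2]{johnson38}, and then run the semigroup/$\Gamma$-calculus argument to obtain the Poincar\'{e} constant $1/c$.

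One technical caution worth flagging: the \emph{pointwise} gradient bound $\Gam{1}(P_t f,P_t f)\le e^{-2ct}P_t\Gam{1}(f,f)$ that you invoke is, in the diffusion setting, equivalent to $\Gam{2}\ge c\,\Gam{1}$, but that equivalence relies on the chain rule (the diffusion property), which fails for jump generators such as birth-and-death chains. The robust route in the discrete case is to work with \emph{integrated} quantities throughout: differentiate $t\mapsto\sum_x P(x)\Gam{1}(P_tf,P_tf)(x)$, use reversibility together with the integrated DBE($c$) to obtain exponential decay of the Dirichlet form $\sum_x P(x)\Gam{1}(P_tf,P_tf)(x)\le e^{-2ct}\sum_x P(x)(\Delta f(x))^2$, and then integrate in $t$ exactly as in your display. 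Your outline is correct once this substitution is made, and this is indeed how \cite{johnson38} proceeds.
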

Note that (see the discussion at the end of \cite{johnson38}),  Theorem \ref{thm:poincare} typically gives a weaker result
than Theorem \ref{thm:daly}, under weaker conditions. Next we describe a modified form of log-Sobolev inequality proved in
\cite{johnson38}:
\begin{theorem}[New modified log-Sobolev inequality] \label{thm:lsi}
Fix probability mass function $P$ whose support is the whole of the positive integers $\Z_+$ and which
satisfies the $c$-log-concavity condition  (Condition \ref{cond:clc}). For any function $f$ with
positive values:
\begin{equation} \label{eq:lsi} \Ent{P}{f}\leq \frac{1}{c} \sum_{x=0}^{\infty}P(x) f(x+1)  \left( \log \left( \frac{ f(x+1)}{f(x)} \right) - 1
+ \frac{f(x)}{f(x+1)} \right) . \end{equation}
\end{theorem}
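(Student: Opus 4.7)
The plan is to follow the discrete Bakry--\'{E}mery semigroup strategy developed in \cite{johnson38} and \cite{caputo}. Introduce a reversible birth-and-death generator $L$ having $P$ as its stationary measure, for instance with birth rate $b(x) \equiv 1$ and death rate $d(x) = P(x-1)/P(x)$, which satisfies the detailed balance relation $P(x)b(x) = P(x+1)d(x+1)$. Write $f_t := e^{tL} f$ for the associated semigroup and note that since $L$ is ergodic on $\Z_+$ (this is where the full-support assumption enters) $f_t \to \lambda_P(f)$ pointwise, so $\Ent{P}{f_t} \to 0$ as $t\to\infty$.

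Next I would exploit entropy dissipation along the semigroup: using invariance of $P$, a direct calculation gives
\begin{equation*}
\Ent{P}{f} \;=\; -\int_0^\infty \frac{d}{dt}\Ent{P}{f_t}\,dt \;=\; \int_0^\infty \mathcal{E}(f_t,\log f_t)\,dt,
\end{equation*}
where $\mathcal{E}(g,h) := -\sum_x P(x)(Lg)(x)h(x)$ is the Dirichlet form associated with $L$. A key algebraic step is then to identify $\mathcal{E}(f,\log f)$ with the target one-sided expression on the RHS of \eqref{eq:lsi}. Indeed, each summand there can be rewritten as a Bregman divergence $f(x)\phi(f(x+1)/f(x))$ with $\phi(u)=u\log u - u + 1 \ge 0$, and by detailed balance the natural Dirichlet sum $\sum P(x)b(x)\Delta f(x)\Delta\log f(x)$ can be rearranged into exactly this one-sided Bregman form after a careful index shift.

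To close the loop, I would invoke the discrete Bakry--\'{E}mery condition DBE($c$), which by \cite[Proposition 4.2]{johnson38} is implied by $c$-log-concavity (Condition \ref{cond:clc}). In the continuous analogue, $\Gam{1}$-$\Gamma_2$ calculus yields the exponential decay $\mathcal{E}(f_t,\log f_t) \leq e^{-ct}\,\mathcal{E}(f,\log f)$, and integrating on $[0,\infty)$ would produce $\Ent{P}{f} \leq (1/c)\mathcal{E}(f,\log f)$, giving \eqref{eq:lsi} once the Dirichlet form has been identified with the target RHS.

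The hard part, in my expectation, is precisely the exponential-decay step for $\mathcal{E}(f_t,\log f_t)$. The continuous Bakry--\'{E}mery proof rests on the chain rule for $\Gam{1}$ and $\Gamma_2$, and no exact discrete analogue exists; in its place one must produce a discrete commutator identity that captures $\Gamma_2 \ge c\,\Gam{1}$ along the semigroup while simultaneously handling the nonlinear $\log$ in $\mathcal{E}(f_t,\log f_t)$. Correction terms generated by the failure of the chain rule have to be absorbed using the $c$-log-concavity hypothesis, and this is plausibly where the one-sided rather than symmetric form of the Dirichlet form becomes forced upon us, explaining the asymmetric shape of the RHS of \eqref{eq:lsi}. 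Making this rigorous is the main technical hurdle, and is precisely where the generator choice of \cite{caputo,johnson38} and the full-support assumption must be leveraged.
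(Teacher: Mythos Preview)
The paper does not prove this theorem; it merely quotes it from \cite{johnson38}, noting only that the argument there uses the birth-and-death Markov chain construction of \cite{caputo} together with ideas from the Bakry--\'{E}mery calculus. At that level of description your outline matches what the paper says.

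However, your sketch contains a concrete error that would block the argument as written. You claim that the Dirichlet form $\mathcal{E}(f,\log f) = \sum_x P(x)\,\Delta f(x)\,\Delta\log f(x)$ ``can be rearranged into exactly this one-sided Bregman form after a careful index shift.'' It cannot: writing $a=f(x+1)$, $b=f(x)$, one has
\[
(a-b)(\log a-\log b) \;-\; \bigl[\,a\log(a/b)-a+b\,\bigr]
\;=\; b\bigl[(a/b)-1-\log(a/b)\bigr]\;\ge\;0,
\]
with strict inequality unless $a=b$. Thus the symmetric Dirichlet form is \emph{strictly larger} than the one-sided expression on the right of \eqref{eq:lsi}. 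Consequently, even if you establish the exponential decay $\mathcal{E}(f_t,\log f_t)\le e^{-ct}\mathcal{E}(f,\log f)$ and integrate, you would only obtain $\Ent{P}{f}\le (1/c)\sum_x P(x)\,\Delta f(x)\,\Delta\log f(x)$, which is weaker than \eqref{eq:lsi}. The asymmetric Bregman-type right-hand side does not fall out of an index shift of the Dirichlet form; producing it is precisely the point of the refined argument in \cite{johnson38}, and your proposal does not yet contain the mechanism that would yield it. Your later remark that ``the one-sided rather than symmetric form \ldots\ becomes forced upon us'' in the decay step is closer to the truth, but it contradicts your earlier identification and is left entirely unspecified.
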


\begin{remark}
As described in \cite{johnson38}, Theorem \ref{thm:lsi} generalizes a modified log-Sobolev inequality of Wu \cite{wu},
which holds in the case where $P = \Pi_\lambda$ (see \cite{yu2} for a more direct proof of this result).
Wu's result in turn strengthens  Theorem \ref{thm:bobled},
the modified logarithmic Sobolev inequality of Bobkov and Ledoux \cite{bobkov3}.
It can be seen directly that Theorem \ref{thm:lsi} tightens Theorem \ref{thm:bobled} using the bound $\log u \leq u-1$ with
$u = f(x+1)/f(x)$.
\end{remark}

Note that since Theorems \ref{thm:poincare} and \ref{thm:lsi} are proved under the $c$-log-concavity condition (Condition
\ref{cond:clc}), they hold under the stronger assumption that $P \in \C{\lambda}$ (Condition \ref{cond:ulc}), taking $c = P(0)/P(1)$.

\begin{remark}
Note that (for reasons similar to those affecting the Johnstone--MacGibbon Fisher information quantity $I(P)$ of Definition
\ref{def:johnstone}), Theorems \ref{thm:poincare} and \ref{thm:lsi} are restricted to mass functions  supported on the whole
of $\Z_+$. In order to consider mass functions such as the Binomial $B_{n,p}$, it would be useful to remove this assumption.

However, one possibility is that in this case, we may wish to modify the form of the derivative used in Definition \ref{def:poindisc}.
In the paper \cite{johnson29}, for mass functions $P$ supported on the finite set $\{0, 1, \ldots, n \}$, a `mixed derivative'
\begin{equation} \label{eq:nablamixed}
\nabla_n f(x) = \left( 1 - \frac{x}{n} \right) \left( f(x+1) - f(x) \right) + \frac{x}{n} \left( f(x) - f(x-1) \right)
\end{equation}
was introduced. This  interpolates between left and right derivatives, according to the position where the derivative
is taken. In \cite{johnson29} it was shown that the Binomial $B_{n,p}$ mass function satisfies a Poincar\'{e} inequality 
with respect to $\nabla_n$. The proof  was based on an expansion  in Krawtchouk polynomials (see \cite{szego}), and 
exactly parallels the results of Chernoff (Example \ref{ex:chernoff}) and Klaassen (Example \ref{ex:klaassen}).
It would be of interest to prove results that mimic Theorem \ref{thm:poincare} and \ref{thm:lsi}
 for the derivative $\nabla_n$ of \eqref{eq:nablamixed}. \end{remark}

\section{Monotonicity of entropy} \label{sec:monotone}

Next we describe results concerning the monotonicity of entropy on summation and thinning, in a regime described in 
\cite{johnson26} as the `law of thin numbers'. 
The first interesting feature is that (unlike the Gaussian case of Theorem \ref{thm:hmonotone}) 
monotonicity of entropy and of 
relative entropy are not equivalent. By convex ordering arguments, Yu \cite{yu2} showed that:
\begin{theorem} \label{thm:yuiid}
Given a random variable $X$ with probability mass function $P_X$ and mean $\lambda$, we
write $D(X)$ for $D( P_X \| \Pi_{\lambda})$.
For independent and identically distributed $X_i$, with mass function $P$ and mean $\lambda$:
\begin{enumerate}
\item
relative entropy
$D \left( \sum_{i=1}^{n} T_{1/n} X_i \right)$ is monotone decreasing in $n$,
\item If $P \in \C{\lambda}$,  the entropy
$H \left( \sum_{i=1}^{n} T_{1/n} X_i \right)$ is monotone increasing in $n$.
\end{enumerate}
\end{theorem}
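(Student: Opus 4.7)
I would tackle both parts by establishing a convex-ordering relationship between $Y_n := T_{1/n}(X_1 + \cdots + X_n)$ and $Y_{n+1}$, and then translating this into the entropy and relative-entropy statements. The starting identity is
\begin{equation*}
Y_{n+1} \stackrel{d}{=} T_{n/(n+1)} Y_n + T_{1/(n+1)} X_{n+1} \quad \text{(independent summands)},
\end{equation*}
which comes from the semigroup property $T_\alpha T_\beta = T_{\alpha\beta}$, the commutativity of thinning with convolution, and the independence of $X_{n+1}$; by exchangeability the same identity holds with any of the $n+1$ IID copies $X_j$ singled out.

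The convex-ordering step uses a coupling: on a common probability space carrying $X_1, \ldots, X_{n+1}$ together with an enriched system of Bernoulli thinning variables, realize either $Y_{n+1}$ or an exchangeable average of the leave-one-out copies $Y_n^{(j)} := T_{1/n}(\sum_{i\neq j} X_i)$ as a conditional expectation of the other, obtaining a martingale comparison. Strassen's characterization then yields the convex-order relation between the two sequences.

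For Part 1, I would combine convex ordering with the data-processing inequality: thinning by $T_{n/(n+1)}$ is a Markov kernel mapping $\Pi_\lambda$ to $\Pi_{n\lambda/(n+1)}$, so $D(T_{n/(n+1)} Y_n \| \Pi_{n\lambda/(n+1)}) \leq D(Y_n \| \Pi_\lambda)$. Applying data processing again through the independent convolution with $T_{1/(n+1)} X$, and then symmetrizing across the $n+1$ exchangeable choices of the distinguished summand, the convex-ordering ingredient controls the residual and collapses the chain to $D(Y_{n+1} \| \Pi_\lambda) \leq D(Y_n \| \Pi_\lambda)$. For Part 2, the ULC hypothesis closes the loop cleanly: the classes $\C{\lambda}$ are preserved under convolution (Walkup--Liggett) and thinning, so $Y_n \in \C{\lambda}$ for all $n$, and on this class the Poisson maximum-entropy property (Theorem \ref{thm:maxent}) combined with the convex order forces $H(Y_{n+1}) \geq H(Y_n)$, since a ``more Poisson-like'' $Y_{n+1}$ within $\C{\lambda}$ sits closer to the maximum $H(\Pi_\lambda)$.

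The principal obstacle is constructing the coupling that produces the convex ordering: the $X_i$ are not themselves Poisson-distributed, so thinning and convolution do not collapse to an explicit form, and one must exploit the exchangeability of the $n+1$ IID copies to arrange the Bernoulli thinning variables just right on the enriched probability space. A secondary difficulty is that $D(\cdot \| \Pi_\lambda)$ is not a fixed convex functional of the argument, so translating convex order into relative-entropy monotonicity in Part 1 relies on data processing combined with symmetrization rather than a direct expectation comparison.
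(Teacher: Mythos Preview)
The paper does not give its own proof of this theorem; it simply attributes the result to Yu \cite{yu2} under the heading ``convex ordering arguments''. Your proposal is in the right spirit, but the way you deploy convex ordering has a genuine gap in each part.

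For Part 2, invoking the Poisson maximum-entropy theorem is the wrong mechanism. Theorem \ref{thm:maxent} gives $H(Y_n)\le H(\Pi_\lambda)$ and $H(Y_{n+1})\le H(\Pi_\lambda)$ separately, but says nothing about which of $H(Y_n)$, $H(Y_{n+1})$ is larger; ``more Poisson-like'' is not an ordering that the theorem respects. Yu's step uses a more elementary consequence of ULC: since $Y_{n+1}\in\C{\lambda}$, its mass function is log-concave, so $\psi(k):=-\log P_{Y_{n+1}}(k)$ is convex. Once the convex ordering $Y_n\le_{cx}Y_{n+1}$ is established (this is the genuine work, and does hold in the ULC case), one gets $\ep[\psi(Y_n)]\le\ep[\psi(Y_{n+1})]=H(Y_{n+1})$, and Gibbs' inequality gives $H(Y_n)\le\ep[\psi(Y_n)]$, closing the chain. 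The Poisson maximum-entropy property is not used.

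For Part 1, the data-processing chain you describe does not close: convolving with the independent $T_{1/(n+1)}X$ maps the reference $\Pi_{n\lambda/(n+1)}$ to $\Pi_{n\lambda/(n+1)}*P_{T_{1/(n+1)}X}$, not to $\Pi_\lambda$, so you arrive at $D(Y_{n+1}\,\|\,Q)\le D(Y_n\,\|\,\Pi_\lambda)$ for the wrong $Q$. Symmetrising over the $n+1$ choices of distinguished index changes nothing in the IID case, since every term is identical. More seriously, for general (non-ULC) $X$ there is no fixed convex-order relation between $Y_n$ and $Y_{n+1}$: a direct computation gives $\var(Y_n)=\lambda+(\var(X)-\lambda)/n$, which is increasing in $n$ when $\var(X)<\lambda$ and decreasing when $\var(X)>\lambda$, so the direction of any Strassen coupling must flip with the sign of $\var(X)-\lambda$. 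Hence ``the convex-ordering ingredient controls the residual'' cannot be made uniform, and Part 1 needs an argument that does not rest on a single one-sided convex-order comparison between $Y_n$ and $Y_{n+1}$.
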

In fact, implicit in the work of Yu \cite{yu2} is the following stronger theorem: 
\begin{theorem} \label{thm:yugeneral}
Given positive $\alpha_i$ such that $\sum_{i=1}^{n+1} \alpha_i = 1$,
and writing $\alpha^{(j)} = 1- \alpha_j$, then
for any independent  $X_i$,
$$ n D \left( \sum_{i=1}^{n+1} T_{\alpha_i} X_i \right)
\leq \sum_{j=1}^{n+1} \alpha^{(j)} D\left( \sum_{i \neq j} T_{\alpha_i/\alpha^{(j)}} 
X_i \right).$$
\end{theorem}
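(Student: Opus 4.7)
The inequality has exactly the form of the ABBN monotonicity inequality (Theorem~\ref{thm:hmonotone}) after one converts $h$ to $D$ by subtracting from the entropy of the extremal law, so my plan is to adapt the ABBN/Yu strategy, with R\'enyi thinning replacing Gaussian convolution. The crucial structural identity is obtained by combining the composition law $T_\beta T_\gamma = T_{\beta\gamma}$ with the fact that thinning distributes over independent sums: this gives $\sum_{i\neq j}T_{\alpha_i}X_i \stackrel{d}{=} T_{\alpha^{(j)}}(S^{(j)})$, and hence the independent-sum decomposition $S = T_{\alpha_j}X_j + T_{\alpha^{(j)}}(S^{(j)})$ valid for each $j$. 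This plays the role of the $\sqrt{\alpha_j}X_j + \sqrt{\alpha^{(j)}}(\cdots)$ split that ABBN exploit.

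From here I would use the thinning interpolation of Lemma~\ref{prop:heateqn} as the discrete analogue of the Gaussian heat semigroup. For any target distribution $P$ with mean $\mu$, let $P_\beta$ run from $\Pi_\mu$ at $\beta=0$ to $P$ at $\beta=1$; differentiating $D(P_\beta \| \Pi_\mu)$ via the heat-equation form of Lemma~\ref{prop:heateqn} and the adjointness of $\Delta,\Delta^*$ yields a de Bruijn-type identity expressing $D(P)$ as an integral over $\beta\in[0,1]$ of a Fisher-type functional $J^*(P_\beta)$ of the scaled score $\rho_\beta$. The same representation applies to each $P_{S^{(j)}}$ along its own thinning flow. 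I would then establish the pointwise leave-one-out bound
\[
n\, J^*(P_{S_\beta}) \;\leq\; \sum_{j=1}^{n+1}\alpha^{(j)}\, J^*\bigl(P_{S^{(j)}_\beta}\bigr)
\]
at each $\beta$, and integrate in $\beta$ to recover the target inequality for relative entropy.

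The hardest part is establishing this pointwise Fisher-type inequality. Ball's continuous variational argument decomposes the score of a sum as a conditional expectation of scores of the leave-one-out sums, and Hilbert-space geometry then gives the needed projection bound. In the discrete thinning case one can attempt the same decomposition using the independent-sum identity from the first step together with the projection machinery underlying the scaled-Fisher subadditivity (Theorem~\ref{thm:khj}); however, the natural Fisher functional emerging from the de Bruijn calculation involves a $\rho\log(1+\rho)$ integrand rather than a clean $\rho^2$, so the orthogonality argument does not transfer verbatim. I would circumvent this by first proving the leave-one-out inequality directly for the scaled Fisher information $K$ of Definition~\ref{def:khj} (where Theorem~\ref{thm:khj} supplies the right projection identity), and then transferring back to $D$ either by integrating along the thinning flow or via a modified logarithmic Sobolev inequality in the spirit of Theorem~\ref{thm:bobled}. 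A final application of data processing for thinning -- $D(T_{\alpha^{(j)}}S^{(j)}) \leq D(S^{(j)})$ -- would then convert any residual terms at "unmatched" thinning levels into the clean $D(S^{(j)})$ on the right-hand side.
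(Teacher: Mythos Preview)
The paper does not supply its own proof of Theorem~\ref{thm:yugeneral}; it simply records the statement as ``implicit in the work of Yu~\cite{yu2}'', whose arguments for Theorem~\ref{thm:yuiid} are described as ``convex ordering arguments''.  So there is no de~Bruijn/Fisher calculation to compare against: Yu's route is considerably more elementary than the programme you outline, relying on data-processing for the thinning and addition channels together with a convex-order comparison, and avoids score functions entirely.

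Your proposal, by contrast, has two concrete gaps.  First, the ``transfer back to $D$'' step does not work by either mechanism you name.  The modified log-Sobolev inequality (Theorem~\ref{thm:bobled}) gives $D\le K$, which is the wrong direction: from a leave-one-out bound $nK(S)\le\sum_j\alpha^{(j)}K(S^{(j)})$ you would obtain $nD(S)\le nK(S)\le\sum_j\alpha^{(j)}K(S^{(j)})$, not $\sum_j\alpha^{(j)}D(S^{(j)})$.  Integration along the thinning flow fares no better, because the de~Bruijn derivative of $D(P_\beta\|\Pi_\mu)$ produced by Lemma~\ref{prop:heateqn} is \emph{not} $K(P_\beta)$ but the $\rho\log$-type functional you yourself flagged; a leave-one-out inequality for $K$ therefore does not integrate to one for $D$.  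In addition, the flows $S_\beta$ and $S^{(j)}_\beta$ are built around different Poisson endpoints (since $\lambda_S\neq\lambda_{S^{(j)}}$ in general), so there is no obvious pointwise coupling at matched~$\beta$.

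Second, even the leave-one-out inequality for $K$ is not a consequence of the projection identity behind Theorem~\ref{thm:khj}.  That identity decomposes $\rho_S$ as a conditional expectation of the \emph{single-summand} scores $\rho_{T_{\alpha_i}X_i}$, which yields the all-to-one subadditivity~\eqref{eq:khj}; it does not express $\rho_S$ in terms of the leave-one-out scores $\rho_{S^{(j)}}$.  Passing to a genuine leave-one-out bound is exactly the step that, in the continuous case, required the new variational idea of~\cite{artstein,ball} beyond Stam's pairwise inequality, and you would need an analogous new ingredient here.  The final data-processing step $D(T_{\alpha^{(j)}}S^{(j)})\le D(S^{(j)})$ is correct, but by that stage the argument has already broken down.
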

In \cite[Theorem 3.2]{johnson27}, the following result was proved:
\begin{theorem}
\label{thm:hmon}
Given positive $\alpha_i$ such that $\sum_{i=1}^{n+1} \alpha_i = 1$,
and writing $\alpha^{(j)} = 1-\alpha_j$, then
for any independent ULC $X_i$,
$$ n H \left( \sum_{i=1}^{n+1} T_{\alpha_i} X_i \right)
\geq \sum_{j=1}^{n+1} \alpha^{(j)} H\left( \sum_{i \neq j} T_{\alpha_i/\alpha^{(j)}} 
X_i \right).$$
\end{theorem}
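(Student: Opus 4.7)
My plan is to adapt the Artstein--Ball--Barthe--Naor proof of Theorem \ref{thm:hmonotone}, replacing Gaussian scaling by thinning and classical Fisher information by the scaled Fisher information $K$ of Definition \ref{def:khj}. The main ingredients are a discrete de Bruijn identity along a thinning interpolation, together with a leave-one-out subadditivity inequality for $K$ in the ULC class.

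First I set up the interpolation. Let $\lambda_i = \lambda_{P_{X_i}}$, write $\lambda = \sum_i \alpha_i \lambda_i$ for the mean of $S := \sum_{i=1}^{n+1} T_{\alpha_i} X_i$, and $\lambda^{(j)} = \sum_{i \neq j} (\alpha_i/\alpha^{(j)}) \lambda_i$ for the mean of $S^{(j)} := \sum_{i \neq j} T_{\alpha_i/\alpha^{(j)}} X_i$. Write $P_\beta$ for the law of $T_\beta S + T_{1-\beta} Z$ with $Z \sim \Pi_\lambda$ independent of $S$, and analogously $P_\beta^{(j)}$. Differentiating $H(P_\beta)$ using Lemma \ref{prop:heateqn} and summing by parts through the adjoint pair $(\Delta, \Delta^*)$ produces a de Bruijn identity expressing $D(S \| \Pi_\lambda)$, and analogously each $D(S^{(j)} \| \Pi_{\lambda^{(j)}})$, as an integral of the scaled Fisher information $K(P_\beta)$ against $d\beta/\beta$ on $(0,1]$.

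The heart of the argument is a leave-one-out variational inequality
\begin{equation}
n\, K\!\Big(\sum_{i=1}^{n+1} T_{\alpha_i} Y_i\Big) \leq \sum_{j=1}^{n+1} \alpha^{(j)}\, K\!\Big(\sum_{i \neq j} T_{\alpha_i/\alpha^{(j)}} Y_i\Big)
\end{equation}
for independent ULC variables $Y_i$. Following Ball's strategy underlying Theorem \ref{thm:hmonotone}, I would represent the scaled score $\rho$ of a thinned convolution as a conditional expectation of the individual scaled scores $\rho_{P_{X_i}}$, then decompose into leave-one-out pieces and apply Jensen's inequality. The ULC hypothesis enters twice: by \cite[Proposition 3.7]{johnson21} both thinning and convolution preserve ULC, so every interpolant $P_\beta$ lies in $\C{\lambda_{P_\beta}}$; and by Remark \ref{rem:ulcequiv} the scaled score $\rho_\beta$ is then monotone decreasing, which is needed to control the cross terms via Chebyshev's sum inequality (the same device already used in the proof of Theorem \ref{thm:maxent}).

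Integrating this Fisher inequality along $\beta$ gives the relative-entropy statement of Theorem \ref{thm:yugeneral}. To pass from $D$ to $H$, I use $H(X) = \lambda_X - \lambda_X \log \lambda_X + \ep[\log X!] - D(X \| \Pi_{\lambda_X})$: the identities $\sum_j \alpha^{(j)} = n$ and $\sum_j \alpha^{(j)} \lambda^{(j)} = n\lambda$ handle the linear terms exactly, concavity of $u \mapsto -u \log u$ via Jensen controls the $\lambda \log \lambda$ correction in the right direction, and the residual $\ep[\log X!]$ term is bounded by the same Chebyshev-sum/monotone-score argument as in the proof of Theorem \ref{thm:maxent}. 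The main obstacle I anticipate is the Fisher information inequality itself: the projection identity behind Theorem \ref{thm:khj} only delivers the $n=2$ subadditivity, and producing the sharper leave-one-out form requires a genuine discrete analogue of Ball's variational formula, carefully adapted to the asymmetric thinning weights $\alpha_i/\alpha^{(j)}$ that vary with the leave-out index $j$.
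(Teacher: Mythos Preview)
Your proposal takes a genuinely different route from the paper. The paper does not prove Theorem~\ref{thm:hmon} here; it cites \cite[Theorem 3.2]{johnson27} and describes that proof as ``based on an analysis of `free energy' terms similar to the $\Lambda(\alpha)$ of Section~\ref{sec:maxent}'', i.e.\ direct manipulation of cross-entropies along a thinning interpolation rather than a Fisher-information argument. In fact the paper explicitly flags the Artstein--Ball variational approach you are attempting as an open desideratum (see the Remark following Theorem~\ref{thm:hmon} and open problem~4 in the final section). So your plan is not a reconstruction of the existing proof but an attack on a problem the author singles out as unresolved.

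There is also a genuine gap in the plan. Your route is: prove a leave-one-out inequality for $K$, integrate to obtain Theorem~\ref{thm:yugeneral}, then pass from $D$ to $H$. But the paper stresses, just before Theorem~\ref{thm:yuiid}, that in the discrete setting monotonicity of $H$ and of $D$ are \emph{not} equivalent. Concretely, Theorem~\ref{thm:yugeneral} holds for \emph{arbitrary} independent $X_i$, with no ULC hypothesis, whereas Theorem~\ref{thm:hmon} genuinely needs ULC. Hence the entire force of the ULC assumption must enter in your final $D\to H$ step, specifically in the inequality $n\,\ep_S[\log S!]\geq\sum_j\alpha^{(j)}\ep_{S^{(j)}}[\log S^{(j)}!]$ for the residual factorial moments. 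Your one-line appeal to ``the same Chebyshev-sum/monotone-score argument as in the proof of Theorem~\ref{thm:maxent}'' does not establish this: that argument controls a single cross-entropy $\Lambda(\alpha)$ along one interpolation, not a comparison between the full sum and $n+1$ leave-one-out sums with rescaled thinning parameters $\alpha_i/\alpha^{(j)}$ that depend on $j$. You also correctly identify the leave-one-out $K$-inequality as unproven; note however that even if you obtain it, it would likely hold without ULC (as Theorem~\ref{thm:khj} does), so it cannot be where ULC is really used. The hard step is the last one, and at present it is only a hope.
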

Comparison with \eqref{eq:hmonotone} shows that this is a direct analogue of the result of Artstein et al. \cite{artstein},
replacing differential entropies $h$ with discrete entropy $H$, and replacing scalings by $\sqrt{\alpha}$ by thinnings by
$\alpha$.
Since the result of Artstein et al. is a strong one, and implies strengthened forms of the Entropy Power Inequality, this 
equivalence is good news for us. However, there are two serious drawbacks.

\begin{remark} First, the proof of Theorem \ref{thm:hmon}, which is based on an analysis
of `free energy' terms similar to the $\Lambda(\alpha)$ of Section \ref{sec:maxent},
 does not lend itself to easy generalization or extension. It is possible that a different proof (perhaps using a variational
characterization similar to that of \cite{artstein2,ball}) may lend more insight.
\end{remark}

\begin{remark}
Second, Theorem \ref{thm:hmon} does not lead to a single universally accepted  discrete
Entropy Power Inequality in the way that Theorem \ref{thm:hmonotone} did.
As discussed in \cite{johnson27}, several natural reformulations of the Entropy Power Inequality fail, and while 
\cite[Theorem 2.5]{johnson27} does prove a particular discrete Entropy Power Inequality, it is by no means the only possible
result of this kind.
 Indeed, a growing literature continues to discuss
the right formulation of this inequality.

For example, \cite{shamai,witsenhausen,wyner9,wyner8} consider replacing integer addition by binary addition, with the paper
\cite{wyner8} introducing the so-called Mrs. Gerber's Lemma, extended to a more general
group context by  Anantharam and Jog \cite{anantharam,jog}.
Harremo\"{e}s and Vignat \cite{harremoes3} (see also \cite{sharma}) consider conditions under which the original functional 
form of the Entropy Power Inequality continues to hold.
Other authors use ideas from additive combinatorics and sumset theory, with
Haghighatshoar, Abbe and Telatar \cite{haghighatshoar} adding an explicit error term and Wang, Woo and Madiman \cite{wang9}
giving a reformulation based on rearrangements. It appears that there is considerable scope for extensions and unifications
of all this theory.
\end{remark}

\section{Entropy concavity and the Shepp--Olkin conjecture} \label{sec:so}
In 1981, Shepp and Olkin \cite{shepp} made a conjecture regarding the entropy of Poisson-binomial random variables.
That is, for any vector $\vc{p} := (p_1, \ldots, p_m)$ where $0 \leq p_i \leq 1$, 
we can write $P_{\vc{p}}$ for the probability mass function  of
the random variable $S  := X_1 + \ldots + X_m$, where $X_i$ are independent Bernoulli($p_i$) variables.
Shepp and Olkin conjectured that the entropy of $H( P_{\vc{p}})$ is a concave function of the parameters.
We can simplify this conjecture by considering the affine case, where each $p_i(t) =  (1-t) p_i(0) + t  p_i(1)$.

This result is plausible since Shepp and Olkin \cite{shepp} (see also \cite{mateev})  proved that for any $m$
 the entropy $H(B_{m,p})$ of a Bernoulli random variable is concave in $p$,
and also claimed in their paper that it was true for the sum of $m$ independent Bernoulli random variables when 
$m=2,3$. Since then, progress was limited. In \cite[Theorem 2]{johnsonc6} it was 
proved that the entropy is concave when for each $i$ either $p_i(0) = 0$ or $p_i(1) = 0$. (In fact, this follows from the 
case $n= 2$ of Theorem \ref{thm:hmon} above). Hillion \cite{hillion2} proved the case of the Shepp--Olkin conjecture
where all $p_i(t)$ are either constant or equal to $t$.

However, in recent work of Hillion and Johnson 
\cite{johnson34,johnson36}, the Shepp--Olkin conjecture was proved, in a result stated as
\cite[Theorem 1.2]{johnson36}:
\begin{theorem}[Shepp-Olkin Theorem] \label{th:SO}
For any $m \geq 1$, function $\vc{p}  \mapsto H(P_{\vc{p}})$ is concave.
\end{theorem}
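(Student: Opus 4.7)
The plan is to reduce, by the standard argument for concavity of a $C^2$ function on a convex set, to showing that along every affine path $p_i(t) = (1-t) p_i(0) + t p_i(1)$ with $t \in [0,1]$, the one-parameter family $t \mapsto H(P_{\vc{p}(t)})$ has non-positive second derivative. Writing $\dot{p}_i := p_i(1) - p_i(0)$ and $\vc{p}_{-i}$ for $\vc{p}$ with the $i$-th coordinate deleted, straightforward differentiation of the product formula for $P_{\vc{p}}(x)$ gives the discrete continuity equation
\[
\partial_t P_{\vc{p}(t)}(x) = \Delta^* J_t(x), \qquad J_t(x) := \sum_{i=1}^m \dot{p}_i \, P_{\vc{p}_{-i}(t)}(x).
\]
This mirrors Lemma~\ref{prop:heateqn} of Section~\ref{sec:maxent}: the time evolution of $P_t$ is written as the discrete divergence of a flux $J_t$ built from the leave-one-out Poisson--binomials.

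Using the adjointness of $\Delta$ and $\Delta^*$, a short calculation gives
\[
\frac{d}{dt} H(P_{\vc{p}(t)}) = \sum_{x \geq 0} J_t(x) \log \frac{P_t(x)}{P_t(x+1)}.
\]
Differentiating once more splits $H''(t)$ into two contributions: a manifestly non-positive Fisher-type piece coming from the derivative of the log-ratio (analogous to the entropy-production term in the de Bruijn identity behind \eqref{eq:heateqn}); and a ``curvature'' piece proportional to $\sum_x \dot{J}_t(x) \log(P_t(x)/P_t(x+1))$, whose sign is not immediate because $\dot{J}_t$ is a sum over ordered pairs $(i,j)$ with $i \neq j$ involving the doubly-reduced mass functions $P_{\vc{p}_{-i,-j}(t)}$.

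The hard step, and the technical heart of \cite{johnson34,johnson36}, is to show this curvature piece is also non-positive. The strategy is to exploit log-concavity at every level: by the discussion after Condition~\ref{cond:ulc}, each $P_{\vc{p}(t)}$, each $P_{\vc{p}_{-i}(t)}$, and each $P_{\vc{p}_{-i,-j}(t)}$ is ultra-log-concave, being a convolution of Bernoullis, so in particular the sequence $\log(P_t(x)/P_t(x+1))$ is monotone in $x$. Expanding $\dot{J}_t$ as a double sum over pairs and pairing each contribution with this monotone log-ratio, one organises the resulting expressions as covariances of monotone sequences and applies a Chebyshev-type inequality (in the spirit of the final step in the proof of Theorem~\ref{thm:maxent}) to extract the required sign. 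Combining the two contributions yields $H''(t) \leq 0$, which proves the theorem. The main obstacle I anticipate is arranging the pairwise bookkeeping so that the monotonicity really is applied to two factors of matched types; it is precisely at this point that a naive expansion fails and a careful Benamou--Brenier-style reparametrisation of the interpolation between $\vc{p}(0)$ and $\vc{p}(1)$ is needed.
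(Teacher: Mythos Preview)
Your setup matches the paper: reduce to affine paths and write $\partial_t P_{\vc{p}(t)}(k) = g(k-1)-g(k)$ (your $J_t$ is the paper's $g$). The gap is in the mechanism for the hard step. The paper also writes the \emph{second} derivative of the mass function in iterated-gradient form, $\partial_t^2 P_{\vc{p}(t)}(k) = h(k-2)-2h(k-1)+h(k)$, and controls $H''(t)$ via a \emph{pointwise} inequality relating $f=P_{\vc{p}(t)}$, $g$ and $h$ (Condition~4 of \cite{johnson34}, displayed here as \eqref{eq:key}). This is not a covariance/Chebyshev statement: its proof rests on a cubic inequality for Poisson--binomial mass functions tied to Br\"and\'en's \emph{iterated} log-concavity \cite{branden}. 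Ordinary ULC and the monotonicity of $\log(P_t(x)/P_t(x+1))$ do not suffice to sign the cross terms that appear; the ``pairwise bookkeeping'' obstacle you flag at the end is real, and a Chebyshev-type correlation inequality does not close it. The missing ingredient is precisely this second-order (iterated) log-concavity, which is strictly stronger than ULC.

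Two smaller points. Your claim that the piece coming from differentiating the log-ratio is ``manifestly non-positive'' is optimistic: after substituting $\partial_t P_t = \Delta^* J_t$, the resulting quadratic form in $J_t$ mixes values at adjacent sites with signs depending on the $\dot p_i$, and in the actual argument the two contributions are handled \emph{together} via \eqref{eq:key}, not separated into a good Fisher term plus a residual. Structurally, \cite{johnson34} first proves the monotone case (all $\dot p_i$ of one sign), where the Benamou--Brenier interpretation is available, and \cite{johnson36} then shows that the monotone case is the worst case for concavity; your sketch conflates these two stages.
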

We  briefly summarise the strategy of the proof, and the ideas involved. In \cite{johnson34}, Theorem \ref{th:SO} was
proved in the monotone case where each $p_i' := p_i'(t)$ has the same sign. In \cite{johnson36}, this was extended to the general case; in  fact, 
the monotone case of \cite{johnson34} is shown to be the worst case. In order to bound the second derivative of entropy $\frac{\partial^2}{\partial t^2} H(P_{\vc{p}(t)})$, we need 
to control the first and second derivatives of the probability mass function $P_{\vc{p}(t)}$, in the spirit of Lemma \ref{prop:heateqn}.
Indeed, we write (see \cite[Equations (8),(9)]{johnson36}) the derivative of the mass function in gradient form (see
\eqref{eq:heateqn} and \eqref{eq:heateqn2} for comparison):
\begin{eqnarray}
 \frac{\partial P_{\vc{p}(t)}}{\partial t}(k) & = & g(k-1) - g(k) \\
  \frac{\partial^2 P_{\vc{p}(t)}}{\partial t^2}(k) & = & h(k-2) - 2h(k-1) + h(k) 
\end{eqnarray}
for certain functions $g(k)$ and $h(k)$, about which we can be completely explicit. The key property (referred to as Condition 4 in \cite{johnson34}) is that 
for all $k$:
\begin{eqnarray} \lefteqn{ h(k) \left( f(k+1)^2 - f(k) f(k+2) \right)} \nonumber  \\
& \leq & 2 g(k) g(k+1) f(k+1) - g(k)^2 f(k+2) - g(k+1)^2 f(k), \label{eq:key}
\end{eqnarray}
which allows us to bound the non-linear terms arising in the 2nd derivative $\frac{\partial^2}{\partial t^2} H(P_{\vc{p}(t)})$. The proof of \eqref{eq:key} is based on a 
cubic inequality \cite[Property($m$)]{johnson34} satisfied by Poisson-binomial mass functions, which relates to an iterated log-concavity result of Br\"{a}nd\'{e}n \cite{branden}.

The argument in \cite{johnson34} was based around the idea of optimal transport of probability measures in the discrete
setting, where it was shown that if all the $p_i'$ have the same sign  then the Shepp--Olkin path provides an optimal interpolation, in the sense of
a discrete formula of Benamou--Brenier type \cite{benamou2,benamou} (see also \cite{erbar,gozlan} which also consider optimal transport of discrete measures, including analysis
of discrete curvature in the sense of \cite{caputo}).

\section{Open problems}
We briefly list some open problems, corresponding to each research direction described in 
Sections \ref{sec:score} to \ref{sec:so}. This is not a complete list of open problems in the
area, but gives an indication of some possible future research directions. 

\begin{enumerate}
\item{{\bf [Poisson approximation]}}
The information theoretic approach to Poisson approximation described in Section \ref{sec:score} holds only in the 
case of independent summands. In the original paper \cite{johnson11} bounds are given on the relative entropy, based on the
data processing inequality, which hold for more general dependence structures.
However, these data processing results do not  give Poisson approximation bounds of optimal order.
It is an interesting problem to generalize the projection identity \cite[Lemma, P.471]{johnson11} to the dependent case
(perhaps resulting in an inequality), under some appropriate model of (negative) dependence, allowing Poisson
approximation results to be proved.

Further, although (as described above) such information theoretic approximation results hold for Poisson and compound Poisson limit distributions, it would be of interest
to generalize this theory to a wider class of discrete distributions.

\item{{\bf [Maximum entropy]}}
While not strictly a question to do with discrete random variables, as described in Section \ref{sec:conts} it remains an
interesting problem to find classes within which the stable laws are maximum entropy. As with the ULC class of 
Condition \ref{cond:ulc}, or the fixed variance class within which the Gaussian maximises entropy, we want such a class 
to be well-behaved on convolution and scaling. 
Preliminary work in this direction, including the fact that stable laws are not in general maximum entropy in their 
own domain of normal attraction (in the sense of \cite{gnedenko}), is described in \cite{johnson30}, and an alternative approach based on non-linear diffusion equations and
fractional calculus is given by Toscani \cite{toscani3}.

\item{{\bf [Concentration inequalities]}}
As described at the end of Section \ref{sec:poincare}, it would be of interest to extend these results to the case of
mass functions supported on a finite set $\{ 0, 1, \ldots, n \}$. We would like to introduce concavity
conditions mirroring Conditions \ref{cond:ulc} or \ref{cond:clc}, under which
results of the form Theorem \ref{thm:poincare} and \ref{thm:lsi} can be proved for the derivative $\nabla_n$ of \eqref{eq:nablamixed}.
In addition, it would be of interest to see whether a version of Theorem
\ref{thm:lsi} holds under a stochastic ordering assumption $P^* \leq_{st} P$, in the
spirit of \cite{daly}.

\item{{\bf [Monotonicity]}}
As in Section \ref{sec:monotone}, it is of interest to develop a new and more illuminating
proof of Theorem \ref{thm:hmon}, perhaps based on transport inequalities as in \cite{artstein}. It is possible that such 
a proof will give some indication of the correct formulation of the discrete Entropy Power Inequality, which is itself
a key open problem.

\item{{\bf [Entropy concavity]}} As described in \cite{johnson36}, it is natural to conjecture that the
$q$-R\'{e}nyi \cite{renyi2} and $q$-Tsallis \cite{tsallis} entropies are concave functions for $q$ sufficiently small. Note that while they are monotone
functions of each other, the $q$-R\'{e}nyi and $q$-Tsallis entropies need not be concave for the same $q$.
We quote the following generalized Shepp-Olkin conjecture from \cite[Conjecture 4.2]{johnson36}:
\begin{enumerate}
\item There is a critical 
$q_R^*$ such that the $q$-R\'{e}nyi entropy of all Bernoulli sums is concave for $q \leq q_R^*$, and the entropy of some interpolation is convex for $q > q^*_R$. 
\item There is a  critical 
$q_T^*$ such that the $q$-Tsallis entropy of all Bernoulli sums is concave for $q \leq q_T^*$, and the entropy of some interpolation is convex for $q > q^*_T$. 
\end{enumerate}
Indeed  we conjecture that $q_R^* = 2$ and $q_T^* = 3.65986\ldots$, the root of $2 - 4 q + 2^q = 0$. We remark that the form of discrete Entropy Power Inequality proposed 
by \cite{wang9} (based on the theory of rearrangements) also holds for R\'{e}nyi entropies.

In addition, Shepp and Olkin \cite{shepp} made another conjecture regarding Bernoulli sums, that the entropy $H(P_{\vc{p}})$
is a monotone increasing function in $\vc{p}$ if all $p_i \leq 1/2$, which remains open with essentially no published results.
\end{enumerate}

\section*{Acknowledgments}
The author thanks  
the Institute for Mathematics and Its Applications
for the invitation and funding 
to speak at the workshop `{\em Information Theory and Concentration Phenomena}'  in Minneapolis in April 2015.
In addition, he would like to thank the organisers and participants of this workshop for stimulating discussions.
The author thanks Fraser Daly, Mokshay Madiman and an anonymous referee for helpful comments on earlier drafts of this paper.

\end{document}